\newcommand{\Z}{\mathbb{Z}}
\newcommand{\Q}{\mathbb{Q}}
\newcommand{\C}{\mathbb{C}}
\let\temp\phi
\let\phi\varphi
\let\varphi\temp
\renewcommand{\(}{\left(}
\renewcommand{\)}{\right)}
\newcommand{\ord}{\operatorname{ord}}
\newcommand{\SL}{\operatorname{SL}}
\newcommand{\GL}{\operatorname{GL}}
\newcommand{\Frob}{\operatorname{Frob}}
\newcommand{\Sh}{\operatorname{Sh}}
\newcommand{\Gal}{\operatorname{Gal}}
\renewcommand{\sl}{\big|}
\newcommand{\sk}{\big|_k }
\renewcommand{\bar}[1]{\overline{#1}}
\newtheorem{theorem}{Theorem}[section]
\newtheorem{lemma}[theorem]{Lemma}
\newtheorem{definition}[theorem]{Definition}
\newtheorem{proposition}[theorem]{Proposition}
\theoremstyle{remark}
\newtheorem*{remark}{Remark}
\numberwithin{equation}{section}
\begin{document}


\title{	Congruence relations for $r$-colored partitions}

\date{\today}
\author{Robert Dicks}
\address{Department of Mathematics\\
University of Illinois\\
Urbana, IL 61801} 
\email{rdicks2@illinois.edu}

 
\begin{abstract}
Let $\ell \geq 5$ be prime. For the partition function $p(n)$ and $5 \leq \ell \leq 31$, Atkin found a number of examples of primes $Q \geq 5$ such that there exist congruences of the form 
$p(\ell Q^{3} n+\beta) \equiv 0 \pmod{\ell}.$
 Recently, Ahlgren, Allen, and Tang proved that there are 
infinitely many such congruences for every $\ell$.
In this paper, for a wide range of $c \in \mathbb{F}_{\ell}$, we prove congruences of the form $p(\ell Q^{3} n+\beta_{0}) \equiv c \cdot p(\ell Q n+\beta_{1}) \pmod{\ell}$ for infinitely many primes $Q$.
For a positive integer $r$, let $p_{r}(n)$ be the $r$-colored partition function. 
Our methods yield similar congruences for $p_{r}(n)$.
In particular, if $r$ is an odd positive integer 
for which $\ell > 5r+19$ and $2^{r+2} \not \equiv 2^{\pm 1} \pmod{\ell}$, then we show that there are infinitely many congruences of the form $p_{r}(\ell Q^{3}n+\beta) \equiv 0 \pmod{\ell}$. Our methods involve the theory of modular Galois representations.
 \end{abstract}


\maketitle

 
 \section{Introduction}
 Let $n \geq 1$ be an integer. A \emph{partition} of $n$ is any non-increasing sequence of positive integers whose sum is $n$. The partition function $p(n)$ counts the number of partitions of $n$. We agree that $p(0)=1$ and that $p(n)=0$ if $n \not \in \{0,1,2, \dots \}.$
  It is well-known that we have
  \begin{equation}\label{generatingfunction}
 \sum^{\infty}_{n=0}p(n)q^n= \prod^{\infty}_{n=1}(1-q^n)^{-1}.
 \end{equation}
 
  The study of the arithmetic properties of $p(n)$ has a long history. For example,
 Ramanujan \cite{Ramanujan} proved the following congruences for $p(n)$:
\begin{equation}\label{Ramanujancongruences}
 p(\ell n+\beta_{\ell}) \equiv 0 \pmod {\ell} \ \ \ \ \ \text{ for $\ell=5,7,11,$}
\end{equation}
 where $\beta_{\ell}:=\frac{1}{24} \pmod{\ell}$. 
  Since the work of Ramanujan, many authors found more examples of congruences for $p(n)$. 
 These take the form
 \begin{equation}\label{JarJaristhekeytoallofthis}
 p(\ell Q^{m}n+\beta) \equiv 0 \pmod{\ell},
 \end{equation}
 where $\ell \geq 5$ is prime, $Q$ is a prime distinct from $\ell$ and $m \geq 1$ is an integer.
 In his breakthrough work, Ono \cite{Kenbreakthrough} showed that for every $\ell \geq 5$, there are infinitely many primes $Q$ for which \eqref{JarJaristhekeytoallofthis} holds with $m=4$.
Such arithmetic phenomena also occur for the coefficients of a wide class of weakly holomorphic modular forms (see e.g. \cite{Treneer} and \cite{Treneer2}).
In contrast to this, Ahlgren and Boylan \cite{Ahlgren-BoylanInventiones} showed that if \eqref{JarJaristhekeytoallofthis} holds for $m=0$, then $\ell=5,7, \text{ or } 11$.
For $m=1,2$, Ahlgren, Beckwith, and Raum \cite{Scarcity} have shown that congruences of the form \eqref{JarJaristhekeytoallofthis} are scarce in a precise sense.

In the 1960s, Atkin discovered examples of congruences \eqref{JarJaristhekeytoallofthis} with $m=3$.
In recent work, Ahlgren, Allen, and Tang \cite{AAT} study the existence of such congruences.
These come from two families which we describe below.

For primes $13\leq \ell \leq 31$, Atkin \cite[eq. $52$]{Hestartedthis} gave examples of primes $Q$ for which
\begin{equation}\label{AtkintypeI}
p\(\frac{\ell Q^{2} n+1}{24}\) \equiv 0 \pmod{\ell} \ \ \ \text{ if } \(\frac{n}{Q}\)=\epsilon_{Q},
\end{equation}
where $\epsilon_{Q} \in \{\pm 1\}$.
By fixing $n$ in one of the allowable residue classes modulo $24Q$, we obtain a congruence \eqref{JarJaristhekeytoallofthis} with $m=3$.
We refer to these as type I congruences. 
For each of $\ell=5,7, \text{ and } 13$, Atkin \cite[Thm. $1$, $2$]{Hestartedthis} showed that for $Q \equiv -2 \pmod{\ell}$, we have
\begin{equation}\label{AtkintypeII}
p\(\frac{Q^{2}n+1}{24}\) \equiv 0 \pmod{\ell} \ \ \text{ if } \(\frac{-n}{\ell}\)=-1 \text{  and  } \(\frac{-n}{Q}\)=-1.
\end{equation}
We refer to these as type II congruences. By using modular Galois representations, Ahlgren, Allen and Tang proved that there are infinitely many type I congruences for every prime $\ell \geq 5$ and that there are infinitely many type II congruences for $\geq 17/24$ of the primes $\ell$.

For a positive integer $r$, define the \emph{$r$-colored partition function} by
\[
\sum^{\infty}_{n=0}p_{r}(n)q^{n}:= \prod^{\infty}_{n=1}(1-q^n)^{-r}=1+rq+\cdots.
\]
We agree that $p_{r}(n)=0$ if $n \not \in \{0,1,2,...\}$. Let $\ell \geq 5$ be prime.
Many congruences have been proven for $p_{r}(n)$ (see e.g. \cite[Thm. $1$]{AtkinRamanujan} and \cite[Thm. $2$]{Gordon}).
 In this paper, for a wide range of $c \in \mathbb{F}_{\ell}$, we prove congruences of the form $p_{r}(\ell Q^{3} n+\beta_{0}) \equiv c \cdot p_{r}(\ell Q n+\beta_{1}) \pmod{\ell}$ for infinitely many primes $Q$. In particular, when $c=0$, we get congruences analogous to \eqref{AtkintypeI}. 
Our first result relies on the assumption that $r$ and $\ell$ are \emph{compatible} 
 (we choose to delay the precise definition to Section $2$ for ease of exposition). Moreover, we will prove in Section~$5$ that $r$ and $\ell$ are compatible if we have $2^{r+2} \not \equiv 2^{\pm 1} \pmod{\ell}$ and $\ell > 5r+19$. We will also prove in Section $5$ that if $1 \leq r \leq 39$ and $\ell > r+4$, then $r$ and $\ell$ are compatible whenever $2^{r+2} \not \equiv 2^{\pm 1} \pmod{\ell}$.
Stating the result requires some notation. Define
\[
 \chi^{(r)}:= \begin{cases} 
\(\frac{-4}{\bullet}\) & \text{if } 3 \mid r, \\
\(\frac{12}{\bullet}\)& \text{if }  3\nmid r.
\end{cases}
\] 
Our first result states that we have type I congruences modulo $\ell$ for the functions $p_{r}(n)$ when $r$ and $\ell$ are compatible.
We state these theorems under the assumption that $r < \ell-4$. When $r=\ell-4$, the situation is much simpler and will be discussed in the last section.
\begin{theorem}\label{experimental main thm}
Suppose that $\ell \geq 5$ is prime, that $r < \ell-4$ is an odd positive integer
and that $r$ and $\ell$ are compatible. Then 
 there exists a positive density set $S$ of primes such that if $Q \in S$, then we have  $Q \equiv 1\pmod{\ell}$ and
 \[
 p_{r}\(\frac{\ell Q^{2}n+r}{24}\) \equiv 0 \pmod{\ell} \ \ \ \text{ if  } \ \ \ \(\frac{n}{Q}\)=\chi^{(r)}(Q)\(\frac{12}{Q}\)\(\frac{-1}{Q}\)^{\frac{\ell-r-2}{2}}.
 \]
\end{theorem}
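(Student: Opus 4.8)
The plan is to encode $p_r$ on the relevant progression into the Fourier coefficients of a cuspidal mod-$\ell$ form of half-integral weight, pass to an integral-weight Hecke eigenform via the Shimura correspondence, and then feed the attached residual Galois representation into Chebotarev's theorem, following the strategy of Ono \cite{Kenbreakthrough} and of Ahlgren, Allen, and Tang \cite{AAT}.

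The first step is to enter the modular world. Since $\sum_{n\ge 0}p_r(n)q^{24n-r}=\eta(24z)^{-r}$ and $\prod_n(1-q^n)^{\ell}\equiv\prod_n(1-q^{\ell n})\pmod{\ell}$, we have $\eta(24z)^{-r}\equiv\eta(24z)^{\ell-r}\eta(24\ell z)^{-1}\pmod{\ell}$, a weakly holomorphic form of weight $\lambda+\tfrac12$ with $\lambda:=\tfrac{\ell-r-2}{2}$ on $\Gamma_0(576\ell)$ with a quadratic character $\chi$ built from Jacobi symbols via Newman's formula ($\ell-r$ is even because $r$ is odd, and $r<\ell-4$ forces $\lambda\ge 2$). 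Set $f:=\big(\eta(24z)^{\ell-r}\eta(24\ell z)^{-1}\big)\big|U_\ell\pmod{\ell}$; a standard analysis of the orders of vanishing of this $\eta$-quotient at the cusps shows that $f$ is congruent to a \emph{cusp} form of weight $\lambda+\tfrac12$ on $\Gamma_0(576)$ with quadratic character, and that $a_f(s)\equiv p_r\!\big(\tfrac{\ell s+r}{24}\big)\pmod{\ell}$ whenever $\tfrac{\ell s+r}{24}\in\Z_{\ge0}$ (with $a_f(s)=0$ otherwise). Decomposing $f$ into Hecke eigenforms over $\overline{\mathbb{F}}_\ell$ and applying the mod-$\ell$ Shimura correspondence together with Deligne--Serre --- for simplicity we argue as if a single eigensystem occurs, the general case following by passing to the finite product of the associated representations, which compatibility keeps large --- we attach to $f$ an integral-weight mod-$\ell$ eigenform $g$ of even weight $k=\ell-r-2\ge 4$ with $f\,\big|\,T_{Q^2}\equiv a_Q(g)\,f\pmod{\ell}$ for all primes $Q\nmid 6\ell$, and a residual representation $\bar\rho_g\colon\Gal(\overline{\Q}/\Q)\to\GL_2(\overline{\mathbb{F}}_\ell)$, unramified outside $6\ell$, with $\Tr\bar\rho_g(\Frob_Q)\equiv a_Q(g)$ and $\det\bar\rho_g(\Frob_Q)\equiv Q^{k-1}\pmod{\ell}$. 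Here \emph{compatibility} of $r$ and $\ell$ is precisely what forbids $\bar\rho_g$ from being reducible, dihedral, or exceptional --- this is where the condition $2^{r+2}\not\equiv 2^{\pm1}\pmod{\ell}$ is used (Section~5) --- so by Dickson's classification the image of $\bar\rho_g$ contains $\SL_2(\mathbb{F}_\ell)$.

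Now for the Chebotarev step. Write $\chi_\ell$ for the mod-$\ell$ cyclotomic character, so $\chi_\ell(\Frob_Q)\equiv Q\pmod{\ell}$, and let $H:=\ker\chi_\ell\cap\ker\chi\cap\ker\chi^{(r)}\cap\ker\leg{12}{\bullet}$, a normal subgroup of $\Gal(\overline{\Q}/\Q)$ of index dividing $8(\ell-1)$. Then $\bar\rho_g(H)$ is a normal subgroup of index dividing $8(\ell-1)$ in the image of $\bar\rho_g$; since $\SL_2(\mathbb{F}_\ell)$ has no proper normal subgroup of index $\le 8(\ell-1)$ for $\ell\ge5$, we conclude $\bar\rho_g(H)\supseteq\SL_2(\mathbb{F}_\ell)$. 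Since $\chi_\ell$ (ramified only at $\ell\ge5$) and the quadratic characters $\chi,\chi^{(r)},\leg{12}{\bullet}$ (ramified only at $2,3$) have coprime conductors, we may first choose $\sigma_0$ with $\chi_\ell(\sigma_0)=1$ and prescribed values of $\chi(\sigma_0),\chi^{(r)}(\sigma_0),\leg{12}{\sigma_0}$, and then, using $\bar\rho_g(H)\supseteq\SL_2(\mathbb{F}_\ell)$ together with the fact that every $\SL_2(\mathbb{F}_\ell)$-coset in $\GL_2(\mathbb{F}_\ell)$ contains matrices of each trace (companion matrices), modify $\sigma_0$ inside $H$ so that in addition $\Tr\bar\rho_g(\sigma_0)=\chi(\sigma_0)\chi^{(r)}(\sigma_0)\leg{12}{\sigma_0}$. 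Hence the relevant set of Frobenius conditions is a nonempty union of conjugacy classes, and Chebotarev's theorem furnishes a positive-density set $S$ of primes $Q$ with $Q\equiv1\pmod{\ell}$ and $a_Q(g)\equiv\chi(Q)\chi^{(r)}(Q)\leg{12}{Q}\pmod{\ell}$.

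To conclude, fix $Q\in S$. Then $Q^{\lambda-1}\equiv Q^{2\lambda-1}\equiv1\pmod{\ell}$, so the half-integral-weight Hecke relation for the eigenform $f$ of character $\chi$ gives, for $\gcd(n,Q)=1$,
\[
p_r\!\left(\frac{\ell Q^2 n+r}{24}\right)\equiv a_f(Q^2 n)\equiv\left(a_Q(g)-\chi(Q)\leg{-1}{Q}^{\lambda}\leg{n}{Q}\right)a_f(n)\pmod{\ell}.
\]
Since $\chi$ is quadratic and $a_Q(g)\equiv\chi(Q)\chi^{(r)}(Q)\leg{12}{Q}$, the right-hand side vanishes exactly when $\leg{n}{Q}=\chi^{(r)}(Q)\leg{12}{Q}\leg{-1}{Q}^{\lambda}=\chi^{(r)}(Q)\leg{12}{Q}\leg{-1}{Q}^{\frac{\ell-r-2}{2}}$, which is the progression in the statement (matching the explicit Jacobi symbols requires only Newman's formula for $\chi$); when $Q\mid n$ or $24\nmid\ell Q^2n+r$ the congruence holds trivially by our conventions. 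This proves the theorem for $Q\in S$. The step I expect to be the main obstacle is the middle one: producing from the weakly holomorphic $\eta$-quotient a genuine cuspidal eigenform (or eigensystem) mod $\ell$ of the predicted weight and level whose residual representation has image containing $\SL_2(\mathbb{F}_\ell)$ --- that is, establishing compatibility and its consequences, where the classification of residual representations with small image must be weighed against the explicit conditions $2^{r+2}\not\equiv2^{\pm1}\pmod{\ell}$ and $\ell>5r+19$ of Section~5. Granting that, the Galois/Chebotarev mechanism above is essentially formal.
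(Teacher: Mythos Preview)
Your overall strategy --- encode the relevant values of $p_r$ in a half-integral-weight cusp form, pass via the Shimura lift to integral weight, invoke the attached Galois representations and Chebotarev, then read off the congruence from the half-integral Hecke relation --- is exactly the paper's. The paper deduces this theorem from Theorem~\ref{thm:main technical} by setting $\alpha=1$; the proof of Theorem~\ref{thm:main technical} in Section~6 follows the same outline you give, with the cusp form constructed via filtrations (Proposition~\ref{FILTRATION}) and the Shimura lift landing in $S^{\operatorname{new}}_{\ell-r-2}(N_r,\epsilon_r)\otimes\chi^{(r)}$ by Yang's results.

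The genuine gap is your sentence ``for simplicity we argue as if a single eigensystem occurs, the general case following by passing to the finite product of the associated representations, which compatibility keeps large.'' This hides two separate issues that the paper has to work for. First, compatibility says only that each individual $\bar\rho_{g_i}$ has image containing $\SL_2(\mathbb{F}_\ell)$; it says nothing about the image of the product, and a Goursat-type argument is needed to find a single $\sigma$ with the prescribed trace on every factor simultaneously --- the paper imports this as \cite[Prop.~3.8]{AAT}. Second, and more seriously, even granting that you can arrange $a_Q(g_i)\equiv\alpha\pmod{\lambda}$ for every newform $g_i$, it does \emph{not} follow that $\Sh_t f\,|\,T_Q\equiv\alpha\,\Sh_t f\pmod{\ell}$. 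The mod-$\ell$ Hecke action on the integral-weight space need not be semisimple, so your ``decomposition of $f$ into Hecke eigenforms over $\bar{\mathbb{F}}_\ell$'' need not exist; equivalently, when one writes $\Sh_t f=\sum c_i g_i$ in characteristic zero, the $c_i$ can have $\lambda$-denominators (exactly when distinct newforms are congruent mod $\lambda$), and then a mod-$\lambda$ control on each $g_i$ is too weak. The paper resolves this by lifting the mod-$\ell$ statement to $g_i\,|\,T_Q\equiv(\alpha_1^{\ell^{m-1}}+\alpha_2^{\ell^{m-1}})g_i\pmod{\lambda^{m}}$ with $m=m(f_{r,\ell})$ via Hensel (Theorem~\ref{theoremtechnical}, using $\alpha=1\not\equiv\pm 2$), and then Lemma~\ref{AAT6-1} absorbs the denominators.

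Finally, your closing paragraph misplaces the obstacle: compatibility is a \emph{hypothesis} here (it is precisely the statement that each $\bar\rho_{g_i}$ has large image), not something to be proved inside this theorem. The real work beyond the formal Chebotarev step is the multiplicity/denominator issue above.
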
 
Thus, by choosing $n$ in any of the allowable residue classes modulo $24Q$, we obtain congruences of the form
\[
p_{r}(\ell Q^{3}n+\beta) \equiv 0 \pmod{\ell}.
\]
\begin{remark}
In the above theorem and the theorems which follow, our definition of density is that of natural density.
\end{remark}
\begin{remark}
When $r$ and $\ell$ are close, the modular form $f_{r,\ell}$ which captures the relevant values of $p_{r}(n)$ modulo $\ell$ has small weight. 
We illustrate this when $r=\ell-6$. In this case,
the form $f_{\ell-6,\ell}$ is in $S_{\frac{5}{2}}(1,\nu^{6\ell-1}_{\eta})$ (see Section $2$ for a definition of this space), which has dimension $0$ if $\ell \not \equiv 1 \pmod{4}$.
For such $\ell$, we have $f_{\ell-6,\ell}=0$, which implies that 
\[
p_{\ell-6}\(\frac{\ell  n+\ell-6}{24}\) \equiv 0 \pmod{\ell}.
\]
\end{remark}
We prove a more general result from which Theorem~\ref{experimental main thm} follows by setting $\alpha=1$.
\begin{theorem}\label{thm:main technical}
    Suppose that $\ell \geq 5$ is prime, that $r < \ell-4$ is an odd positive integer and that $r$ and $\ell$ are compatible. Suppose that $\alpha \not \equiv -2 \pmod{\ell}$ is an integer.
Then there exists a positive density set $S$ of primes such that if $Q \in S$, then $Q \equiv 1\pmod{\ell}$ and
 \[
 p_{r}\(\frac{\ell Q^{2}n+r}{24}\) \equiv (\alpha-1)\chi^{(r)}(Q)p_{r}\(\frac{\ell n+r}{24}\) \pmod{\ell} \ \text{ if } \ \ \(\frac{n}{Q}\)=\chi^{(r)}(Q)\(\frac{12}{Q}\)\(\frac{-1}{Q}\)^{\frac{\ell-r-2}{2}}.
 \]
 \end{theorem}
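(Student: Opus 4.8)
The plan is to work with the modular form $f_{r,\ell}$ (of half-integral or integral weight, depending on the parity considerations laid out in Section~2) whose $q$-expansion captures $p_r\!\left(\frac{\ell n + r}{24}\right)$ modulo $\ell$, and to study it through the associated mod-$\ell$ Galois representation $\bar\rho_{f_{r,\ell}}$. First I would recall from Section~2 that compatibility of $r$ and $\ell$ guarantees that $f_{r,\ell}$ is (congruent mod $\ell$ to) a Hecke eigenform in the relevant space, and—this is the role of the compatibility hypothesis—that its residual representation is \emph{nice}: irreducible, not induced from a character of a quadratic field, and with image large enough that the Chebotarev argument below produces a \emph{positive density} set of primes rather than merely infinitely many. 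Using the standard dictionary, a prime $Q \nmid \ell$ with $Q \equiv 1 \pmod{\ell}$ acts on the relevant coefficients via the Hecke operator $T_Q$, and on the two-dimensional residual representation via $\mathrm{tr}\,\bar\rho(\mathrm{Frob}_Q) = a_Q$ and $\det \bar\rho(\mathrm{Frob}_Q) = Q^{k-1}\chi(Q) \pmod\ell$, where $k$ is the weight and $\chi$ the nebentypus; since $Q\equiv 1 \pmod\ell$ the determinant collapses to $\chi(Q) = \chi^{(r)}(Q)$ up to the explicit quadratic factors $\left(\tfrac{12}{Q}\right)\left(\tfrac{-1}{Q}\right)^{(\ell-r-2)/2}$ bookkept in the statement.

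Next I would impose the Frobenius condition. The target congruence $p_r\!\left(\frac{\ell Q^2 n + r}{24}\right) \equiv (\alpha-1)\chi^{(r)}(Q)\, p_r\!\left(\frac{\ell n + r}{24}\right) \pmod\ell$, together with the sign condition on $\left(\tfrac{n}{Q}\right)$, is exactly what one gets from the action of $T_{Q^2}$ (half-integral weight) or $T_Q$ followed by the Hecke recursion on $n$ coprime to $Q$ versus divisible by $Q$, provided $\bar\rho(\mathrm{Frob}_Q)$ is conjugate to a prescribed element. Concretely, I want $a_Q \equiv \alpha \cdot (\text{the relevant quadratic twist factor}) \pmod\ell$—or equivalently that $\bar\rho(\mathrm{Frob}_Q)$ has trace matching $\alpha$ and determinant matching $\chi^{(r)}(Q)$; the hypothesis $\alpha \not\equiv -2 \pmod\ell$ is precisely the condition ensuring the desired conjugacy class is the one that annihilates the shifted coefficients (the value $-2$ being the degenerate trace at which the Hecke relation $a_{Q^2} = a_Q^2 - \text{(det)}$ fails to force the vanishing on the arithmetic progression). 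I would then invoke the Chebotarev density theorem applied to the fixed field of $\ker\bar\rho$ (adjusted by the cyclotomic condition $Q\equiv 1\pmod\ell$): the set of primes $Q$ with $\mathrm{Frob}_Q$ in the chosen conjugacy class has positive density, and compatibility guarantees this class is nonempty.

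The main obstacle I anticipate is \emph{verifying that the required conjugacy class is nonempty and that the density is positive}—i.e., controlling the image of $\bar\rho_{f_{r,\ell}}$. This is where the compatibility condition does its real work: one must rule out that the image is exceptional (contained in a Borel, a normalizer of a torus, or an exotic subgroup), because in those cases the element with the prescribed trace $\alpha$ and determinant $\chi^{(r)}(Q)$ might not occur, or the progression $Q\equiv 1\pmod\ell$ might be incompatible with it. Handling this amounts to: (i) showing $f_{r,\ell}$ is not congruent to an Eisenstein series (irreducibility), which should follow from a weight/filtration argument as in Ahlgren–Boylan; (ii) showing it is not a CM/dihedral form, using that $3 \mid r$ versus $3 \nmid r$ pins down $\chi^{(r)}$ and one checks the level is too small to support the relevant CM; and (iii) the remaining exotic cases are excluded by the numerical bound $\ell > 5r+19$ (or the refined $\ell > r+4$ for $r \le 39$). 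Once the image is known to contain $\mathrm{SL}_2(\mathbb{F}_\ell)$ (or an index-bounded subgroup), the element of trace $\alpha$, determinant $1$ exists for every $\alpha \in \mathbb{F}_\ell$ with $\alpha \neq \pm 2$ in the split/nonsplit dichotomy—and the hypothesis $\alpha \not\equiv -2$ is exactly what we need—so Chebotarev closes the argument. Finally, I would translate back: fixing $n$ in an allowable residue class mod $24Q$ converts the conditional congruence into an unconditional one of the form $p_r(\ell Q^3 n + \beta) \equiv 0 \pmod\ell$ when $\alpha = 1$, recovering Theorem~\ref{experimental main thm}.
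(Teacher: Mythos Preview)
Your outline misidentifies the central object and, with it, the role of both the compatibility hypothesis and the restriction $\alpha \not\equiv -2 \pmod{\ell}$.

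The form $f_{r,\ell}$ is a half-integral weight cusp form, and there is no single residual representation $\bar\rho_{f_{r,\ell}}$ attached to it. What the paper actually does is pass to the integral weight side via Shimura lifts: for each squarefree $t$ one has $\Sh_t f_{r,\ell} = F_t \otimes \chi^{(r)}$ with $F_t \in S^{\operatorname{new}}_{\ell-r-2}(N_r,\epsilon_r)$, and each $F_t$ is a linear combination $\sum_i c_{i,t} g_i$ of the newforms $g_1,\dots,g_d$ in that space. Compatibility is \emph{not} the statement that $f_{r,\ell}$ is congruent to a single eigenform; it is the statement that for \emph{every} newform $g_i$ the image of $\bar\rho_{g_i}$ contains a conjugate of $\SL_2(\mathbb{F}_\ell)$. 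The Chebotarev argument then has to produce a prime $Q$ for which \emph{all} the $g_i$ simultaneously satisfy $g_i\sl T_Q \equiv \alpha\, g_i$; this is the content of Theorem~\ref{theoremtechnical}, which relies on \cite[Proposition~3.8]{AAT} to find a single $\sigma \in \Gal(\bar\Q/\Q(\zeta_\ell))$ whose image under each $\bar\rho_{g_i}$ has characteristic polynomial $x^2-\alpha x+1$. Your single-representation picture skips this simultaneity problem entirely.

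There is a second gap hidden in the passage from the $g_i$ back to $f_{r,\ell}$. The coefficients $c_{i,t}$ need not be $\lambda$-integral, so a congruence $g_i\sl T_Q \equiv \alpha\, g_i \pmod{\lambda}$ for each $i$ does not immediately give $F_t\sl T_Q \equiv \alpha\, F_t \pmod{\lambda}$. One must instead work modulo $\lambda^{m(f_{r,\ell})}$ with $m(f_{r,\ell})$ large enough to clear the denominators (Lemma~\ref{AAT6-1}), and to lift the mod-$\lambda$ eigenvalue $\alpha$ to an eigenvalue mod $\lambda^m$ one diagonalizes $\rho_{g_i}(\sigma)$ over $\mathcal{O}_\lambda$ via Hensel's lemma. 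Hensel requires the roots $\alpha_1,\alpha_2$ of $x^2-\alpha x+1$ to be distinct mod $\lambda$, i.e.\ $\alpha \not\equiv \pm 2 \pmod{\ell}$. The case $\alpha \equiv 2$ is then handled separately by a softer argument (Serre's exercise on Frobenius acting trivially), leaving only $\alpha \equiv -2$ excluded. Your explanation that $-2$ is the ``degenerate trace at which the Hecke relation fails to force vanishing'' is not the reason; elements of trace $-2$ and determinant $1$ exist in $\SL_2(\mathbb{F}_\ell)$ without difficulty. The obstruction is purely in the Hensel lifting step needed to absorb the denominators of the $c_{i,t}$.
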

 Thus, by choosing $n$ in any of the allowable residue classes modulo $24Q$  
 we obtain congruences of the form
\[
p_{r}(\ell Q^{3}n+\beta_0) \equiv(\alpha-1)\chi^{(r)}(Q) p_{r}(\ell Q n+\beta_1) \pmod{\ell}.
\]
 \begin{remark}
If we make the further assumption that there is no congruence modulo any prime above $\ell$ between distinct newforms in $S\(-r\ell,\frac{\ell-r-1}{2}\)$ (see Section $6$ for the definition), Theorem~\ref{thm:main technical} also holds when $\alpha \equiv -2 \pmod{\ell}$.
 \end{remark}
The next result gives congruences analogous to $(1.4)$ and $(1.5)$ involving primes $Q$ in a different residue class modulo $\ell$. Our result does not rely on the assumption of compatibility.
\begin{theorem}\label{thm:arbitraryweight1}
Suppose that $r$ is an odd positive integer.
Suppose that $\ell \geq 5$ is prime, that $r < \ell-4$ and that
\begin{equation}\label{thatbluehedgehogagainofallplaces}
\text{ there exists an integer $a$ with $2^{a} \equiv -1 \pmod{\ell}$}.
\end{equation}
\begin{enumerate}
\item
There exists a positive density set $S$ of primes with the property that if $Q \in S$, then $Q \equiv -2 \pmod{\ell}$ and there exists $\epsilon_{Q} \in \{\pm 1\}$ such that
\[
p_{r}\(\frac{\ell Q^{2}n+r}{24}\) \equiv 0 \pmod{\ell} \ \ \  \text{ if } \\ \ \(\frac{n}{Q}\)= \epsilon_{Q}.
\]
\item
Suppose that all of the integers $-r \leq w<0$ with $w \equiv -r \pmod{24}$ satisfy $\(\frac{-rw}{\ell}\)=1$. Then there exists a positive density set $S$ of primes with the property that if $Q \in S$, then $Q \equiv -2 \pmod{\ell}$ and there exists $\epsilon_{Q} \in \{\pm 1\}$ such that
\[
p_{r}\(\frac{Q^{2}n+r}{24}\) \equiv 0 \pmod{\ell} \ \ \text{  if } \ \ \(\frac{-rn}{\ell}\)=-1 \ \ \ \text{ and }  \ \(\frac{n}{Q}\)= \epsilon_{Q}.
\]
\end{enumerate}
\end{theorem}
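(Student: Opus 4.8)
The plan is to realize the relevant values of $p_r(n) \pmod{\ell}$ as coefficients of a half-integral weight cusp form $f_{r,\ell}$ of weight $\frac{\ell-r-2}{2}$ (or $\frac{\ell-r-1}{2}$, depending on parity of $r$) on $\SL_2(\Z)$ with a power of the eta-multiplier, exactly as set up in Section~$2$ for the compatible case. The hypothesis \eqref{thatbluehedgehogagainofallplaces} that $2$ has even order mod $\ell$ (equivalently $-1 \in \langle 2 \rangle \subseteq \mathbb{F}_\ell^\times$) is what forces $Q \equiv -2 \pmod{\ell}$ to be a square mod $\ell$ in the relevant twist, so that the Shimura lift of $f_{r,\ell}$ reduces mod $\ell$ to an integral-weight eigenform $g$ whose $Q$-th Hecke eigenvalue we can control. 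The first step is therefore to pass from $f_{r,\ell}$ to $g \in S_{\ell-r-1}^{\mathrm{new}}(\Gamma_0(N))$ for an appropriate level $N \in \{1, \text{small}\}$ via the mod-$\ell$ Shimura correspondence, and to attach to $g$ its residual Galois representation $\bar\rho_g \colon \Gal(\bar\Q/\Q) \to \GL_2(\bar{\mathbb{F}}_\ell)$.

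Next I would analyze $\bar\rho_g$. For part (1), the point is that $Q \equiv -2 \pmod{\ell}$ pins down $\det\bar\rho_g(\Frob_Q) = Q^{\ell-r-2} \equiv (-2)^{\ell-r-2} \pmod{\ell}$, and we want a positive-density set of $Q$ in this arithmetic progression for which the trace $a_Q \equiv \Tr\bar\rho_g(\Frob_Q)$ takes the specific value that makes $p_r\!\left(\frac{\ell Q^2 n + r}{24}\right) \equiv 0$ whenever $\leg{n}{Q} = \epsilon_Q$; this "annihilating" value is the one for which the $Q$-th Hecke operator acts on $f_{r,\ell}$ with the eigenvalue $0$ on the relevant eigenspace after twisting, i.e. $a_Q^2 \equiv (\text{the split/inert-independent part})$, which by the Hecke recursion for half-integral weight forms translates into $U_{Q^2}$ acting as a scalar that kills one of the two quadratic-residue classes. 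Concretely one wants $\bar\rho_g(\Frob_Q)$ to have trace equal to $\pm(1 + Q^{\ell-r-2})$ or to have a prescribed eigenvalue; the existence of a positive-density set of such $Q$ inside the residue class $Q \equiv -2 \pmod\ell$ follows from the Chebotarev density theorem applied to a suitable finite extension cut out by $\bar\rho_g$ together with the $\ell$-th and (small) other cyclotomic conditions, once one checks the relevant conjugacy class in the image is nonempty — which is where one must rule out that $\bar\rho_g$ is too small (e.g. reducible, or dihedral in a way incompatible with the congruence class), using $r < \ell - 4$ to guarantee the weight $\ell - r - 1 \geq 4$ is in the range where the image is large, or handling the exceptional cases directly.

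For part (2), the extra hypothesis that every $w$ with $-r \leq w < 0$, $w \equiv -r \pmod{24}$ satisfies $\leg{-rw}{\ell} = 1$ is precisely what guarantees that the mod-$\ell$ reduction of $f_{r,\ell}$ is supported, after applying the relevant Hecke/sieving operator, on coefficients indexed by $n$ with $\leg{-rn}{\ell} = -1$ — i.e. it kills the "Ramanujan-type" part and leaves a form whose nonzero coefficients live in one quadratic class mod $\ell$; then the same Shimura-lift-plus-Chebotarev argument as in part (1), now also imposing the congruence condition at $\ell$ (hence the disappearance of the factor $\ell$ in $\frac{Q^2 n + r}{24}$), produces the desired family. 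I would phrase both parts uniformly: build the integral-weight eigenform, use Deligne's construction of $\bar\rho_g$, and run Chebotarev on $\bar\rho_g \times (\text{cyclotomic character mod a small modulus})$ to land in the prescribed class of $Q$ with the prescribed Frobenius behavior.

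The main obstacle I expect is the same one that always arises in these arguments: showing the target conjugacy class is actually \emph{hit} by the image of the (twisted) Galois representation — i.e. that $\bar\rho_g$ is not exceptional (reducible or with projective image dihedral/exceptional) in a way that would make the intersection of "Frobenius has the annihilating eigenvalue" with "$Q \equiv -2 \pmod \ell$" empty. Handling this requires either invoking known results on the images of residual representations of the relevant level and weight (and the bound $r < \ell - 4$ keeping us away from the small-weight pathologies noted in the Remark for $r = \ell - 6$), or, in the finitely many genuinely exceptional $(r,\ell)$, checking by hand that the corresponding space of forms is small enough that the congruence is automatic. A secondary, more bookkeeping-level obstacle is making the half-integral-to-integral weight dictionary fully explicit — keeping track of how the quadratic twist by $\leg{n}{Q}$, the character $\chi^{(r)}$, and the $\eta$-multiplier interact — so that the Frobenius condition on $\bar\rho_g$ cleanly matches the stated condition $\leg{n}{Q} = \epsilon_Q$; but this is the kind of computation that Section~$2$'s framework should already make routine.
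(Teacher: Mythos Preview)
Your overall architecture (pass to the integral-weight side via the Shimura lift, then control Hecke eigenvalues through Galois representations and Chebotarev) matches the paper's, but you have misidentified the key mechanism, and this is a genuine gap. You frame the ``main obstacle'' as establishing that the image of $\bar\rho_g$ is large enough to hit the desired conjugacy class. The paper, however, proves Theorem~\ref{thm:arbitraryweight1} \emph{without} any large-image or compatibility assumption; indeed the whole point of this theorem (as noted just before its statement) is that it does not rely on compatibility. The hypothesis $2^{a}\equiv -1\pmod{\ell}$ is not used to make $-2$ a square mod $\ell$ as you suggest; rather, it feeds into the black-box result Theorem~\ref{Theorem4.2} (a consequence of \cite[Thm.~4.2]{AAT}), which exploits the \emph{local} structure of $\rho_f$ at the prime $2$ dividing the level. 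By part~(2) of Theorem~\ref{bigGalThm}, $\rho_f|_{G_2}$ has an explicit shape determined by the Atkin--Lehner eigenvalue $\epsilon_2$, and this supplies, for every newform in $S(r,k)$ simultaneously, a single concrete element of the image whose powers realize the needed trace $-(-\epsilon_2)^{a}Q^{k-3/2}$ at primes $Q\equiv -2\pmod{\ell}$. No global image analysis is required. Your proposed route of ``rule out reducible/dihedral/exceptional'' would essentially reprove compatibility, which is neither assumed here nor always true under the stated hypotheses, so that path cannot close the argument. Relatedly, the target trace is not $\pm(1+Q^{\ell-r-2})$ but $\pm Q^{(\ell-r-4)/2}$; this is exactly the value that, via Lemma~\ref{AAT6-2} with $\alpha=1$, kills the coefficients on one quadratic residue class.

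For part~(2) your reading of the extra hypothesis is also off. It is not a sieving condition on the coefficients of $f_{r,\ell}$; instead the paper builds a \emph{different} form $g_{r,\ell}\in S_{\frac{\ell^2-r-1}{2}}(1,\nu_\eta^{-r},\Z)$ via Proposition~\ref{arbitraryweight2proposition} (using $\Theta^{(\ell-1)/2}\Delta^{r\delta_\ell}$), and the condition on the $w$'s with $-r\le w<0$, $w\equiv -r\pmod{24}$ is exactly what is needed to divide by $\eta^{r\ell^2}$ and land in a cusp form space. Once $g_{r,\ell}$ is in hand, the same application of Theorem~\ref{Theorem4.2} and Lemma~\ref{AAT6-2} finishes part~(2) by the identical mechanism as part~(1).
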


\begin{remark}
The value of $\epsilon_{Q}$ in $(1)$ and $(2)$ can be made explicit for any $r$ and $\ell$ using the arguments at the end of Section $6$.
\end{remark}
\begin{remark}
The condition on the integers $-r \leq w<0$ with $w \equiv -r\pmod{24}$ in $(2)$ is necessary to ensure that the form $g_{r,\ell}$ which we construct in Section $3$ is a cusp form.
We can use the same method to prove an analogous result for primes $\ell \geq 5$ with the property that $3^a \equiv -2 \pmod{\ell}$ for some integer $a$. However, 
we would need to add the condition that there are no congruences between distinct newforms in the spaces under consideration. 
\end{remark}
\begin{remark}
 If $r \leq 23$, then the condition on the integers $-r \leq w<0$ with $w \equiv -r\pmod{24}$ is always satisfied, since the only choice for $w$ is $-r$. It also holds for $r=25$ (since it is a square) and $r=27$ (since $\(\frac{3}{\ell}\)=\(\frac{27}{\ell}\)$).
For a fixed $r$, we can use the quantitative version of Dirichlet's theorem on primes in arithmetic progressions, quadratic reciprocity, and the Chinese remainder theorem to prove that the condition holds for a positive proportion of  the primes $\ell$.
\end{remark}
The organization of this paper is as follows. In Section $2$, we give some background on modular forms and Galois representations. In Section $3$, we construct for each $r$ and $\ell$ important modular forms whose coefficients capture the relevant values of $p_{r}(n)$ modulo $\ell$.
In the remaining sections (save for Section $7$), our main tool is the theory of modular Galois representations.
In Section $4$, we prove that a wide range of $r$ and $\ell$ are compatible.
 In Section $5$, we prove the main technical result which we will need in order to prove Theorem~\ref{thm:main technical}. 
 In Section $6$, we prove Theorems $1.2-1.3$.
 Finally, in Section $7$, we discuss congruences for $p_{r}(n)$ when $r=\ell-4$; in particular, we give a short proof of \cite[Thm. $2.1$, $(2)$]{Boylan}.
 \section{Background}
 We follow the exposition in \cite{Scarcity} and \cite{AAT}.
 Throughout, let $\ell \geq 5$ be prime and set $q:=e^{2 \pi i z}$. 
 Suppose that $k \in \frac{1}{2}\Z$ and that $N$ is a positive integer.
  For a function $f(z)$ on the upper half plane  and 
\[
 \gamma =\left(\begin{matrix}a & b \\c & d\end{matrix}\right) \in \GL_{2}^{+}(\Q),
\]
we have the weight $k$ slash operator 
\[
f(z)\sk \gamma := \det(\gamma)^{\frac{k}{2}}(cz+d)^{-k}f\(\frac{az+b}{cz+d}\).
\]

Let $A \subseteq \C$ be a subring.
If $\nu$ is a multiplier system on $\Gamma_0(N)$, 
we denote by $M_{k}(N,\nu,A)$ (resp. $S_{k}(N,\nu,A)$)
the space of modular forms (resp. cusp forms) 
of weight $k$ and multiplier 
$\nu$ on $\Gamma_0(N)$ whose Fourier coefficients are in $A$.
When $\nu=1$ or $A$ is the subring of algebraic numbers that are integral at all of the primes above $\ell$, we omit them from the notation.
Forms in these spaces satisfy the transformation law
\[
f \sk \gamma= \nu(\gamma)f \ \ \ \text{ for } \ \ \ \gamma = \left(\begin{matrix}a & b \\c & d\end{matrix}\right) \in \Gamma_0(N)
\]
and the appropriate conditions at the cusps of $\Gamma_0(N)$. 

If $k$ is even, we denote by $S^{\text{new}}_{k}(N,\C)$ the new subspace. Let
$S^{\text{new}}_{k}(N,A)$ be defined by $S_{k}(N,A)  \cap S^{\text{new}}_{k}(N,\C)$.
When $N$ is square-free, there is an Atkin-Lehner involution $W_{p}$ on $S_{k}(N,\C)$ for every prime divisor $p$ of $N$. Given a tuple $\epsilon=(\epsilon_{p})_{p \mid N}$ (where each $\epsilon_{p} \in \{\pm 1\}$), let $S^{\operatorname{new}}_{k}(N,\C,\epsilon)$ be the subspace of forms $f$ with $f\sl_{k} W_{p}=\epsilon_{p}f$ for $p \mid N$. If $\ell$ and $N$ are coprime, then we let $S^{\operatorname{new}}_{k}(N,\epsilon)$ be the subspace of $S^{\operatorname{new}}_{k}(N)$ attached to the tuple $\epsilon$ (for such $\ell$, the operator $W_{p}$ acts on $S^{\operatorname{new}}_{k}(N)$). 
We define the eta function by
\[
\eta(z):= q^{\frac{1}{24}}\prod_{n=1}^{\infty}(1-q^{n})
\]
and the theta function by
\[
\theta(z):= \sum_{n=-\infty}^{\infty}q^{n^2}.
\]
The eta function has a multiplier $\nu_{\eta}$ satisfying

\[
\eta(\gamma z)=\nu_{\eta}(\gamma)(cz+d)^{\frac{1}{2}}\eta(z), \ \ \ \ \ \gamma= \left(\begin{matrix}a & b \\c & d\end{matrix}\right) \in \SL_{2}(\Z);
\]
throughout, we choose the principal branch of the square root. For $c > 0$,
we have the formula 
  \cite[~$\mathsection$$4.1$]{Knopp}

\begin{equation}\label{etamultiplier}
\nu_{\eta}(\gamma)=
 \begin{cases} 
 \(\frac{d}{c}\)e\(\frac{1}{24}((a+d)c-bd(c^2-1)-3c )\) & \text{if } c \text{ is odd,} \\
\(\frac{c}{d}\)e\(\frac{1}{24}((a+d)c-bd(c^2-1)+3d-3-3cd)\) & \text{if } c \text{ is even}.
\end{cases}
\end{equation}
For the multiplier of the theta function we have
\[
\nu_{\theta}(\gamma):= (cz+d)^{-\frac{1}{2}}\frac{\theta(\gamma z)}{\theta(z)}=\(\frac{c}{d}\)\epsilon_{d}^{-1}, \ \ \ \ \ \gamma= \left(\begin{matrix}a & b \\c & d\end{matrix}\right) \in \Gamma_0(4),
\]
where
\[
\epsilon_{d}=
\begin{cases}
1 & \text{if } d \equiv 1 \pmod{4}, \\
i & \text{if } d \equiv 3 \pmod{4}.
\end{cases}
\]
For odd values of $d$, we have the formula
\begin{equation}\label{pain}
e\(\frac{1-d}{8}\)=\(\frac{2}{d}\)\epsilon_{d}.
\end{equation}
For $r \in \Z$, we have
\begin{equation}\label{had to tag this}
 M_{k}(1,\nu^{r}_{\eta})=\{0\} \ \ \ \text{if} \ \ \ 2k-r \not \equiv 0 \pmod{4}.
\end{equation}
If $f \in M_{k}(1,\nu^{r}_{\eta})$, then
$\eta^{-r}f$ is a weakly holomorphic form on $\SL_2(\Z)$.
This implies that each $f \in M_{k}(1,\nu^{r}_{\eta})$ has a Fourier expansion of the form
\begin{equation}\label{rmod24expansion}
f=\sum_{n \equiv r \pmod{24}} a(n)q^{\frac{n}{24}}.
\end{equation}
We have
$\eta^{-r}f \in M_{k-\frac{r}{2}}(1)$
when $0<r<24$.

We next recall the $U$ and $V$ operators. For a positive integer $m$, we define them on Fourier expansions by
\[
\(\sum_{n=1}^{\infty}a(n)q^{\frac{n}{24}}\)\sl U_{m}:= \sum_{n=1}^{\infty}a(mn)q^{\frac{n}{24}},
\]

\[
\(\sum_{n=1}^{\infty}a(n)q^{\frac{n}{24}}\)\sl V_{m}:= \sum_{n=1}^{\infty}a(n)q^{\frac{mn}{24}}.
\]
If $k \in \frac{1}{2}\Z \backslash \Z$,
then a computation using \eqref{etamultiplier}, \eqref{pain} and \eqref{had to tag this} implies that 
\begin{equation}\label{Shimura3nmidr}
f \in S_{k}(1,\nu^{r}_{\eta}) \implies f \sl V_{24} \in S_{k}\(576,\(\frac{12}{\bullet}\)\nu^{r}_{\theta}\) \ \ \ \text{ if $3 \nmid r$}
\end{equation}
 and
\begin{equation}\label{Shimura3midr}
f \in S_{k}(1,\nu^{r}_{\eta}) \implies f\sl V_{8} \in  S_{k}\(64,\nu^{r}_{\theta}\) \ \ \ \text{ if $3 \mid r$.}
\end{equation}

 If $k \in \frac{1}{2}\Z \backslash \Z$, then for each prime $Q \geq 3$ we have the Hecke operator
 \[
 T_{Q^{2}}: S_{k}(1,\nu^{r}_{\eta})\rightarrow S_{k}(1,\nu^{r}_{\eta}).
 \]
 For $f =\displaystyle \sum a(n)q^{\frac{n}{24}} \in S_{k}(1,\nu^{r}_{\eta})$ and $Q \geq 5$, we have
 \begin{equation}\label{Hecke1}
 f \sl T_{Q^{2}}=\sum \(a(Q^{2} n)+Q^{k-\frac{3}{2}}\(\frac{-1}{Q}\)^{k-\frac{1}{2}}\(\frac{12 n}{Q}\)a(n)+Q^{2k-2}a\(\frac{n}{Q^2}\)\)q^{\frac{n}{24}}.
 \end{equation}
(see e.g. \cite[Proposition $11$]{Yang}). Yang only states the result for $r$ such that $3 \nmid r$.  The result when $3 \mid r$ follows from \eqref{Shimura3midr} and \cite[Thm. $1.7$]{Shimura}. When $3 \mid r$, we also have \eqref{Hecke1} for $Q=3$.

For $k \in \frac{1}{2}\Z \backslash \Z$ such that $k \geq \frac{5}{2}$ and each odd squarefree $t$, if $3 \nmid r$ (resp. $3 \mid r$), then we have a Shimura lift on $S_{k}(1,\nu^{r}_{\eta})$ defined via \eqref{Shimura3nmidr} (resp. \eqref{Shimura3midr}) and the usual Shimura lift \cite{Shimura} on $S_{k}\(576,\(\frac{12}{\bullet}\)\nu^{r}_{\theta}\)$ (resp. $S_{k}\(64,\nu^{r}_{\theta}\)$).
Define
\[
 \psi^{(r)}:= \begin{cases} 
\(\frac{4}{\bullet}\) & \text{if } 3 \mid r, \\
\(\frac{12}{\bullet}\)& \text{if }  3\nmid r.
\end{cases}
\] 
and
\[
w_r:=
\begin{cases}
3 & \text{if } 3 \mid r, \\
1 & \text{if } 3 \nmid r.
\end{cases}
\]
The action on Fourier expansions is given by
\[
\text{Sh}_{t}\(\sum a(n)q^{\frac{n}{24}}\)=\sum A_{t}(n)q^{n},
\]
where
\[A_{t}(n)=\sum_{d \mid n}\(\frac{-1}{d}\)^{k-\frac{1}{2}}\psi^{(r)}(d)\(\frac{t}{d}\)d^{k-\frac{3}{2}}a\(\frac{w_{r}tn^{2}}{d^{2}}\).
\]
 We have (see e.g. \cite[$(2.13)$]{Scarcity})
\begin{equation}\label{f0iffShimura0}
f \equiv 0 \pmod{\ell} \iff \text{Sh}_{t}(f) \equiv 0 \pmod{\ell} \ \ \ \text{ for all squarefree $t$}.
\end{equation}
They only state their result for when $3 \nmid r$, but the same argument applies when $3 \mid r$.
From \cite[Thm. $1$, $2$]{Yang}, it follows that
\[
\text{Sh}_{t}:S_{k}(1,\nu^{r}_{\eta}) \rightarrow S^{\text{new}}_{2k-1}\(6,-\(\frac{8}{r}\),-\(\frac{12}{r}\)\) \otimes \(\frac{12}{\bullet}\) \ \ \text{if $3 \nmid r$}
\]
and
\[
\text{Sh}_{t}:S_{k}(1,\nu^{r}_{\eta}) \rightarrow S_{2k-1}^{\text{new}}\(2, \(\frac{8}{r}\)\) \otimes\(\frac{-4}{\bullet}\) \ \ \text{if $3\mid r$}
\]
(note that Yang uses $3r$ in place of $r$).
Moreover, we have
\[
\Sh_{t}\(f\sl T_{Q^{2}}\)=\(\Sh_{t}f\)\sl T_{Q},
\]
where $T_{Q}$ is the Hecke operator of index $Q$ on the integral weight space.

We now summarize some facts about modular Galois representations. See \cite{Hida} and \cite{Edixhoven} for more details. 
We begin with some notation. Recall that $\ell \geq 5$ is prime. Let $k$ be an even integer and $N \in \Z^{+}$ with $\ell \nmid N$. 
Let $\bar{\Q} \subseteq \C$ be the algebraic closure of $\Q$ in $\C$. If $p$ is prime, let $\bar{\Q}_{p}$ be a fixed algebraic closure of $\Q_{p}$ and fix an embedding $\iota_{p}:\bar{\Q} \hookrightarrow \bar{\Q}_{p}$. 
 The embedding $\iota_{\ell}$ allows us to view the coefficients of forms in $S_{k}(N)$ as elements of $\bar{\Q}_{\ell}$, and for each prime $p$,
the embedding $\iota_{p}$ allows us to view $G_{p}:=\Gal(\bar{\Q}_{p}/\Q_{p})$ as a subgroup of $G_{\Q}:=\Gal(\bar{\Q}/\Q)$.
 If $I_{p} \subseteq G_{p}$ is the inertia subgroup, we denote the coset of absolute Frobenius elements above $p$ in $G_{p}/I_{p}$ by $\text{Frob}_{p}$. 
 For any finite extension $K/\Q$, 
 denote by $\text{Frob}_{p} |_{K}$ the restrictions to $K$ of elements in $\text{Frob}_{p}$.
 For representations $\rho:G_{\Q} \rightarrow \GL_{2}(\bar{\Q}_{\ell})$ which are unramified at $p$ (which is to say that $I_{p} \subseteq \ker{\rho}$), the image of any $\sigma \in G_{p}$ under $\rho$ only depends on $\sigma I_{p} \in G_{p}/I_{p}$ (the same conclusion holds for representations of $\Gal(K/\Q)$ and $\sigma |_{K}$); for any such representation, we denote by $\rho(\text{Frob}_{p})$ the image of any element in $\text{Frob}_{p}$ under $\rho$ and make a similar definition for representations of 
 $\Gal(K/\Q)$ and $\operatorname{Frob}_{p} |_{K}$.

  We denote by $\chi:G_{\Q}\rightarrow \Z^{*}_{\ell}$ and $\omega:G_{\Q}\rightarrow \mathbb{F}^{*}_{\ell}$ the $\ell$-adic and mod $\ell$ cyclotomic characters, respectively.
 We let $\omega_{2},\omega'_{2}:I_{\ell}\rightarrow\mathbb{F}^{*}_{\ell^{2}}$ denote Serre's fundamental characters of level $2$ (see \cite[~$\mathsection$$2.1$]{DukeSerre}).
 Both characters have order $\ell^{2}-1$, and we have 
 $\omega^{\ell+1}_{2}=\omega'^{\ell+1}_{2}=\omega$.

The following theorem is due to Deligne, Fontaine, Langlands, Ribet, and Shimura (see \cite[Thm. 2.1]{AAT}). 
\begin{theorem}\label{bigGalThm}
Let $f=q+\sum_{n \geq 2}a(n)q^{n} \in S_{k}(N)$ be a normalized Hecke eigenform. There is a continuous irreducible representation $\rho_{f}:G_{\Q} \rightarrow \GL_{2}(\bar{\Q}_{\ell})$ with semisimple mod $\ell$ reduction $\bar{\rho}_{f}:G_{\Q} \rightarrow \GL_{2}(\bar{\mathbb{F}}_{\ell})$ satisfying the following properties.
\begin{enumerate}
\item
If $p \nmid \ell N$, then $\rho_{f}$ is unramified at $p$ and the characteristic polynomial of $\rho_{f}(\Frob_{p})$ is $X^2-\iota_{\ell}(a(p))X+p^{k-1}$. 
\item
If $q \mid N$ and $q^2 \nmid N$, then $\rho_{f} |_{I_{q}}$ is unipotent. 
In particular, the prime-to-$\ell$ Artin conductor $N(\bar{\rho}_{f})$ of $\bar{\rho}_{f}$ is not divisible by $q^{2}$.
If further $f$ is $q$-new, then we have
\[
\rho_{f}|_{G_{q}} \cong \left(\begin{matrix}\chi \psi & * \\0 & \psi \end{matrix}\right),
\]
where $\psi:G_{q} \rightarrow \bar{\Q}^{*}_{\ell}$ is the unramified character with $\psi({\Frob}_{q})=\iota_{\ell}(a_{q})$.
\item
Assume that $2 \leq k \leq \ell+1$. Then 
\begin{itemize}
\item
If $\iota_{\ell}(a_{\ell})$ is an $\ell$-adic unit, then $\rho_{f} |_{G_{\ell}}$ is reducible and we have
\[
\rho_{f}|_{I_{\ell}} \cong \left(\begin{matrix}\chi^{k-1} & * \\0 & 1 \end{matrix}\right).
\]
\item
If $\iota_{\ell}(a_{\ell})$ is not an $\ell$-adic unit, then $\bar{\rho}_{f}\sl_{G_{\ell}}$ is irreducible and $\bar{\rho}_{f}|_{I_{\ell}} \cong \omega^{k-1}_{2} \oplus \omega'^{(k-1)}_{2}$.
\end{itemize}
\end{enumerate}
\end{theorem}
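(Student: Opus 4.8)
Since the statement packages together several deep theorems, the plan is to indicate how each clause is extracted from the literature rather than to carry out the arguments in full.

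I would first construct $\rho_f$. For $k=2$ one uses the $\ell$-adic Tate module of the abelian variety cut out by $f$ in the Jacobian of the modular curve of level $N$ (Eichler--Shimura); for $k>2$ one takes the $f$-isotypic piece of the $(k-1)$-st $\ell$-adic \'etale cohomology of the $(k-2)$-fold Kuga--Sato variety over that curve, on which the Hecke operators act (Deligne). This yields a continuous two-dimensional representation over the completion of the Hecke eigenvalue field at the place above $\ell$ determined by $\iota_\ell$, and for a newform it is irreducible: a nontrivial invariant line would make $\rho_f$ a sum of two characters with product $\chi^{k-1}$ and the correct traces, forcing $f$ to be an Eisenstein series, contrary to cuspidality (Ribet). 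Reducing a $G_\Q$-stable lattice and semisimplifying the reduction produces the well-defined $\bar\rho_f$.

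For (1), smooth proper base change shows $\rho_f$ is unramified outside $\ell N$, and the Eichler--Shimura congruence relation $T_p \equiv \Frob_p + p^{k-1}\Frob_p^{-1}$ on the good reduction of the modular curve at $p$, combined with the identification $\det\rho_f=\chi^{k-1}$ (the nebentypus being trivial here), forces the characteristic polynomial of $\rho_f(\Frob_p)$ to be $X^2-\iota_\ell(a(p))X+p^{k-1}$. For (2), at a prime $q$ with $q\mid N$ and $q^2\nmid N$ the local component $\pi_q$ of the automorphic representation attached to $f$ is an unramified twist of the Steinberg representation exactly when $f$ is $q$-new; local--global compatibility (Carayol, Langlands, Deligne) then identifies $\rho_f|_{G_q}$ with the associated Weil--Deligne representation, which is the non-split extension $\pmatrix{\chi\psi}{*}{0}{\psi}$ with $\psi$ unramified and $\psi(\Frob_q)=\iota_\ell(a_q)$; unipotence of $\rho_f|_{I_q}$ and the bound $q^2\nmid N(\bar\rho_f)$ follow at once.

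For (3) I would appeal to $p$-adic Hodge theory. When $2\leq k\leq \ell+1$, the restriction $\rho_f|_{G_\ell}$ is crystalline with Hodge--Tate weights $\{0,k-1\}$, which lie in the Fontaine--Laffaille range. If $\iota_\ell(a_\ell)$ is an $\ell$-adic unit ($f$ ordinary at $\ell$), the theorem of Deligne, refined by Mazur--Wiles and Wiles, gives that $\rho_f|_{G_\ell}$ is reducible with unramified quotient and $\rho_f|_{I_\ell}\cong \pmatrix{\chi^{k-1}}{*}{0}{1}$. If $\iota_\ell(a_\ell)$ is not an $\ell$-adic unit, the associated filtered $\phi$-module is irreducible, so $\bar\rho_f|_{G_\ell}$ is irreducible, and the Fontaine--Laffaille classification of such modules identifies $\bar\rho_f|_{I_\ell}$ with $\omega^{k-1}_{2}\oplus\omega'^{(k-1)}_{2}$ via the fundamental characters of level $2$ (Serre, Fontaine, Edixhoven). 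The main obstacle is simply that two of these ingredients---the existence statement for $k>2$ through Kuga--Sato varieties and the local analysis at $\ell$ through Fontaine--Laffaille theory---are themselves hard theorems; in an exposition like this one they are quoted, as the paper does by citing \cite{Hida}, \cite{Edixhoven}, and \cite[Thm. 2.1]{AAT}.
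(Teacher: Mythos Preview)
Your proposal is correct and aligns with the paper's treatment: the paper does not prove this theorem at all but simply attributes it to Deligne, Fontaine, Langlands, Ribet, and Shimura with a reference to \cite[Thm.~2.1]{AAT}, together with the background sources \cite{Hida} and \cite{Edixhoven}. Your sketch goes further than the paper by correctly identifying where each ingredient lives in the literature (Eichler--Shimura and Deligne for the construction, Ribet for irreducibility, Carayol--Langlands--Deligne for local--global compatibility at $q\mid N$, and Deligne/Mazur--Wiles and Fontaine--Laffaille/Edixhoven for the behavior at $\ell$), which is entirely appropriate for a result of this type.
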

\begin{remark}
The Galois representations depend on the choice of embedding $\iota_{\ell}:\bar{\Q} \hookrightarrow \bar{\Q}_{\ell}$, but we have suppressed this from the notation.
\end{remark}
We now define what it means for $r$ and $\ell$ to be compatible. Define
 \[
N_{r}:= \begin{cases} 
2 & \text{if } 3 \mid r, \\
6  & \text{if }  3\nmid r
\end{cases}
\] 
and 
 \[
\epsilon_{r}:= \begin{cases} 
\(\frac{8}{r\ell}\) & \text{if } 3 \mid r, \\
(-\(\frac{8}{r\ell}\),-\(\frac{12}{r\ell}\) ) & \text{if }  3\nmid r.
\end{cases}
\] 
\begin{definition}
Suppose that $\ell \geq 5$ is prime and that $r$ is an odd positive integer such that $r < \ell-4$.
We say that $r$ and $\ell$ are compatible if
for every newform $f \in S^{\operatorname{new}}_{\ell-r-2}(N_{r},\epsilon_{r})$ the image of $\bar{\rho}_{f}$ contains a conjugate of $\SL_{2}(\mathbb{F}_{\ell})$.
\end{definition}
We now discuss filtrations. Recall that $\ell \geq 5$ is prime.
 If $f \in M_{k}(1,\Z)$ is given by $f=\displaystyle \sum^{\infty}_{n=0} a(n)q^{n}$, then we define
\[
\bar{f}:= \sum^{\infty}_{n=0} \bar{a(n)}q^{n} \in \mathbb{F}_{\ell}[[q]]
\]
and
\[
w_{\ell}(\bar{f}):= \inf\{ k': \ \ \ \text{there exists $g \in M_{k'}(1,\Z)$ with $\bar{f}=\bar{g}$}\}.
\]
We also define 
\begin{equation}\label{RamanujanTheta}
\Theta:=\frac{1}{2\pi i}\frac{d}{dz}=q\frac{d}{dq}.
\end{equation}
We require the following facts (see e.g. \cite[$\mathsection$$2.2$]{Serre1} and \cite[$\mathsection$$1$]{Jochnowitz}).
\begin{lemma}\label{Serre}
Let $k$ and $k'$ be even integers. Let $f \in M_{k}(1,\Z)$ and $\ell \geq 5$ be prime. 
\begin{enumerate}
\item
We have
\[
w_{\ell}(f \sl U_{\ell}) \leq \ell+\frac{w_{\ell}(f)-1}{\ell}.
\]
\item
If $g \in M_{k'}(1,\Z)$ satisfies $\bar{f}=\bar{g}$, then we have $k \equiv k'\pmod{\ell-1}$.
\item
There exists a form $g \in S_{k+\ell+1}(1,\Z)$ with $g \equiv \Theta f \pmod {\ell}$.
\item
For $i \in \Z^{+}$, we have $w_{\ell}(f^{i})=iw_{\ell}(f)$.
\end{enumerate}
\end{lemma}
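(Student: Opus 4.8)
The plan is to deduce all four statements from the Serre--Swinnerton-Dyer structure theory of mod $\ell$ modular forms on $\SL_2(\Z)$; the real content is in importing the right classical facts and assembling them in the right order rather than in any single new argument. First I would fix the setup: let $\til M\subseteq\mathbb{F}_\ell[[q]]$ be the image of $\mathbb{F}_\ell[E_4,E_6]$ under $q$-expansion, i.e.\ the graded $\mathbb{F}_\ell$-algebra of reductions mod $\ell$ of $q$-expansions of forms in $\bigoplus_k M_k(1,\Z)$, and recall that for $\ell\ge 5$ the kernel of $\mathbb{F}_\ell[E_4,E_6]\to\mathbb{F}_\ell[[q]]$ is the principal ideal $(A-1)$, where $A$ is the weighted-homogeneous polynomial of weight $\ell-1$ with $A\mapsto\bar E_{\ell-1}$ (the Hasse invariant). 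The one arithmetic input here is that $E_{\ell-1}$ is $\ell$-integral with $E_{\ell-1}\equiv 1\pmod\ell$ (von Staudt--Clausen), so that $A$ has $q$-expansion $1$; since $A$ is homogeneous of weight $\ell-1$, multiplication by $A$ leaves the $q$-expansion unchanged while raising weight by $\ell-1$. Part (2) follows at once: two homogeneous lifts of one $q$-expansion differ by an element of $(A-1)$, and comparing homogeneous components (using $M_2(1,\Z)=0$) forces their weights to be congruent mod $\ell-1$. The same principle shows that $w_\ell(\bar f)$ is the weight of the unique-up-to-scalar homogeneous representative of $\bar f$ in $\mathbb{F}_\ell[E_4,E_6]$ not divisible by $A$, since a divisible representative could be stripped of a factor $A$ to lower its weight.

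I would then dispatch parts (3) and (4), the two ``engine'' statements. For (4) the extra input is Deuring's theorem that the supersingular polynomial mod $\ell$ has simple roots, equivalently that $A$ is squarefree in the UFD $\mathbb{F}_\ell[E_4,E_6]$. If $f_0$ is the minimal-weight representative of $\bar f$ (so $A\nmid f_0$), then some prime factor of $A$ fails to divide $f_0$, hence fails to divide $f_0^{\,i}$; so $A\nmid f_0^{\,i}$, and $f_0^{\,i}$ is the minimal-weight representative of $\bar f^{\,i}$, giving $w_\ell(\bar f^{\,i})=i\,w_\ell(\bar f)$. For (3) I would use the Serre derivative $\partial_k f:=\Theta f-\tfrac{k}{12}E_2 f\in M_{k+2}(1,\Z_{(\ell)})$ together with the Kummer congruence $E_2\equiv E_{\ell+1}\pmod\ell$ (valid for $\ell\ge 5$ since $\ell+1\not\equiv 0\pmod{\ell-1}$); from $12\,\Theta f=kE_2 f+12\,\partial_k f$ one deduces that $\overline{\Theta f}$ lies in the weight-$(k+\ell+1)$ graded piece of $\til M$, absorbing the lower-weight summand $\overline{\partial_k f}$ by a factor of $A$. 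Hence $\overline{\Theta f}$ is the reduction of some $h\in M_{k+\ell+1}(1,\Z)$; since $\Theta f$ has zero constant term, the constant term of $h$ is divisible by $\ell$, and $k+\ell+1$ is even and $\ge 6$, so subtracting that constant times a monomial $E_4^aE_6^b\in M_{k+\ell+1}(1,\Z)$ with constant term $1$ produces the required $g\in S_{k+\ell+1}(1,\Z)$.

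Finally I would prove (1) by combining these. From $T_\ell f=f\sl U_\ell+\ell^{k-1}f\sl V_\ell$ it follows that $f\sl U_\ell$ reduces mod $\ell$ to an element of $\til M$, so $w_\ell(f\sl U_\ell)$ is defined and (4) applies to it. On $\mathbb{F}_\ell[[q]]$ one has $\Theta^\ell=\Theta$, so $\Theta^{\ell-1}$ is the projection onto the part supported on exponents coprime to $\ell$; hence $\bar f-\Theta^{\ell-1}\bar f$ is the ``$\ell\mid n$'' part of $\bar f$, which equals $(\bar f\sl U_\ell)\sl V_\ell$. Since $V_\ell$ reduces to the $\ell$-power Frobenius on $\mathbb{F}_\ell[[q]]$, this becomes $(\bar f\sl U_\ell)^\ell=\bar f-\Theta^{\ell-1}\bar f$. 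Iterating (3) a total of $\ell-1$ times bounds $w_\ell(\Theta^{\ell-1}\bar f)$ by $w_\ell(\bar f)+(\ell-1)(\ell+1)=w_\ell(\bar f)+\ell^2-1$, and since by (2) the two summands on the right lie in a common weight class modulo $\ell-1$, the right-hand side has filtration $\le w_\ell(\bar f)+\ell^2-1$; by (4) the left-hand side has filtration exactly $\ell\,w_\ell(f\sl U_\ell)$, so dividing by $\ell$ yields $w_\ell(f\sl U_\ell)\le\ell+\tfrac{w_\ell(f)-1}{\ell}$.

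The assembly is not the obstacle; the weight of the argument sits in the two classical structural inputs it rests on --- that $\ker(\mathbb{F}_\ell[E_4,E_6]\to\mathbb{F}_\ell[[q]])=(A-1)$, which underlies part (2) and the ``minimal representative $=$ not divisible by $A$'' principle, and that $A=\bar E_{\ell-1}$ is squarefree, which is what makes the filtration multiplicative in (4) and thereby drives (1). Both are standard (Serre, Swinnerton-Dyer, Deuring; see \cite[\S 2.2]{Serre1} and \cite[\S 1]{Jochnowitz}). The residual care is bookkeeping: keeping all forms in a common weight class modulo $\ell-1$ before adding or subtracting, and passing from congruences mod $\ell$ to honest integral (and cusp) forms, as indicated above.
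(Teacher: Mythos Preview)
Your argument is correct and follows the classical Serre--Swinnerton-Dyer--Jochnowitz line. The paper itself does not give a proof of this lemma at all: it simply records the four facts and cites \cite[\S2.2]{Serre1} and \cite[\S1]{Jochnowitz} as references. So there is nothing to compare your proof against in the paper; you have supplied exactly the argument underlying those references, and at a level of detail beyond what the paper requires.
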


Finally, for an even integer $k$, denote the weight $k$ Eisenstein series on $\SL_2(\Z)$ by $E_{k}$.
 \section{The modular forms $f_{r,\ell}$ and $g_{r,\ell}$}
 Let 
 \[
 \Delta:= q\prod^{\infty}_{n=1}(1-q^{n})^{24}
 \]
 be the unique normalized cusp form of weight $12$ on $\SL_2(\Z)$.
 Set
 \[
 \delta_{\ell}:=\frac{\ell^{2}-1}{24}.
 \]
By studying the filtration $w_{\ell}(\Delta^{r\delta_{\ell}} \sl U_{\ell})$, we prove the following result.

\begin{proposition}\label{FILTRATION}
Suppose that $\ell \geq 5$ is prime. Assume that $r$ is an odd positive integer with $r \leq \ell-4$. 
Then there exists a modular form $f_{r,\ell} \in S_{\frac{\ell-r-1}{2}}(1,\nu^{-r\ell}_{\eta},\Z)$ with
\begin{equation}\label{THATBLUEHEDGEHOG}
f_{r,\ell} \equiv \sum p_{r}\(\ell n-r\delta_{\ell}\)q^{n-\frac{r\ell}{24}} \equiv \sum p_{r}\(\frac{\ell n+r}{24}\)q^{\frac{n}{24}}\pmod{\ell}.
\end{equation}
\end{proposition}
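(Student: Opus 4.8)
The plan is to realize the values $p_r\!\left(\frac{\ell n+r}{24}\right)$ modulo $\ell$ as the Fourier coefficients of an explicit half-integral weight cusp form obtained from $\Delta$ by applying $U_\ell$, and then to control the weight of this form via a filtration argument. First I would start from the generating function identity $\sum p_r(n)q^n = \prod(1-q^n)^{-r}$, so that in terms of $\eta$ we have $\eta^{-r} = q^{-r/24}\prod(1-q^n)^{-r} = \sum_n p_r\!\left(\frac{n+r}{24}\right)q^{n/24}$ (a weakly holomorphic form of weight $-r/2$ on $\SL_2(\Z)$ with multiplier $\nu_\eta^{-r}$). Using the congruence $\Delta \equiv \eta^{24} \pmod{\ell}$ — more precisely, since $\prod(1-q^n)^{\ell} \equiv \prod(1-q^{\ell n}) \pmod{\ell}$, one gets $\Delta^{r\delta_\ell} = q^{r\delta_\ell}\prod(1-q^n)^{24 r \delta_\ell}$ and $24r\delta_\ell = r(\ell^2-1)$, so that $\prod(1-q^n)^{r(\ell^2-1)}$ is congruent mod $\ell$ to something built from $\prod(1-q^{\ell n})^{r\ell}\prod(1-q^n)^{-r}$ — I would write $\Delta^{r\delta_\ell} \equiv \eta(\ell z)^{r\ell}\,\eta(z)^{-r\ell}\cdot(\text{power of }q)\cdot \eta(z)^{?}$ carefully so that applying $U_\ell$ kills the $\eta(\ell z)$ factor on the nose and leaves $\sum p_r(\ell n - r\delta_\ell)q^{n - r\ell/24}$, which is the asserted $q$-expansion. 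The exponent bookkeeping $n - r\ell/24 \leftrightarrow (\ell n + r)/24$ after the substitution $n \mapsto (n+r)/24$ (equivalently noting $-r\ell/24 \equiv r/24 \pmod{1}$ up to the congruence class mod $24$) is routine but must be done precisely.

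Next I would establish that this $q$-series is genuinely the reduction of a holomorphic cusp form of the claimed weight. Here the key input is Lemma~\ref{Serre}(1): since $\Delta^{r\delta_\ell}$ is a cusp form on $\SL_2(\Z)$ of weight $k = 12 r\delta_\ell = \frac{r(\ell^2-1)}{2}$, its filtration satisfies $w_\ell(\Delta^{r\delta_\ell}) \le \frac{r(\ell^2-1)}{2}$ (in fact $w_\ell(\Delta^{r\delta_\ell}) = r\delta_\ell \cdot w_\ell(\Delta) = 12 r\delta_\ell$ by Lemma~\ref{Serre}(4) together with $w_\ell(\Delta)=12$), hence by Lemma~\ref{Serre}(1),
\[
w_\ell\!\left(\Delta^{r\delta_\ell}\,\big|\,U_\ell\right) \le \ell + \frac{12 r\delta_\ell - 1}{\ell} = \ell + \frac{\frac{r(\ell^2-1)}{2}-1}{\ell}.
\]
Simplifying, $\frac{r(\ell^2-1)}{2\ell}$ is slightly less than $\frac{r\ell}{2}$, and one computes that this bound collapses to an integral weight $k_0$ congruent to $\frac{r(\ell-1)}{2}\cdot\text{something}$ modulo $\ell-1$; the reduction $\Theta$-cycle / filtration analysis should pin it down exactly to $k_0 = \frac{(\ell-r-1)\ell}{?}$ so that, after multiplying back by $\eta^{-r\ell}$ (which has weight $-\frac{r\ell}{2}$), we land in weight $\frac{k_0}{1} - \frac{r\ell}{2} = \frac{\ell - r - 1}{2}$. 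I would present this as: the $U_\ell$-image, viewed as an integral-weight form mod $\ell$, has small enough filtration that after twisting by the right power of $\eta$ it defines a form in $S_{\frac{\ell-r-1}{2}}(1,\nu_\eta^{-r\ell})$; cuspidality follows because $U_\ell$ applied to a cusp form gives a cusp form and the resulting $q$-expansion has no constant term (the argument of $p_r$, namely $\ell n - r\delta_\ell$, is negative for $n=0$ since $r\delta_\ell > 0$). The integrality of the coefficients in $\Z$ (rather than $\ZL$) follows since $\Delta^{r\delta_\ell}$ and the $U_\ell$, $V$, $\eta$-twisting operations all preserve integral $q$-expansions, and the space $M_k(1,\Z)$ has a $\Z$-basis.

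The main obstacle, I expect, is the precise filtration/weight computation: showing that the bound from Lemma~\ref{Serre}(1) is \emph{exactly} the weight $\frac{\ell-r-1}{2}$ (and not merely an upper bound giving a form of some potentially larger weight) and that the resulting form lies in the half-integral weight space attached to $\nu_\eta^{-r\ell}$ with the correct multiplier system — this requires knowing that $w_\ell$ of the $U_\ell$-image actually achieves the stated value, which uses the structure of the $\Theta$-operator on filtrations and the hypothesis $r \le \ell - 4$ (to keep $\frac{\ell-r-1}{2}$ positive and large enough, i.e.\ $\ge \tfrac12$, and to ensure we are in the stable range where Lemma~\ref{Serre}(1) is sharp). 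The congruence $2k - (-r\ell) \equiv 0 \pmod 4$ required by \eqref{had to tag this} for the space to be nonzero should be checked from $k = \frac{\ell-r-1}{2}$, $r$ odd, and the parity of $\ell$; this is a short consistency check rather than a difficulty. Everything else — the eta-product manipulations, the action of $U_\ell$ on $q$-expansions, and the shift in the exponent of $q$ — is bookkeeping that I would carry out but not dwell on.
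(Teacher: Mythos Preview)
Your overall strategy matches the paper's: start from $\Delta^{r\delta_\ell}$, apply $U_\ell$, bound the filtration, then divide by $\eta^{r\ell}$. However, two concrete steps are missing from your sketch, and your guess about how to fill the first one is off.

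First, the weight computation. The bound from Lemma~\ref{Serre}(1) gives
\[
k \;\leq\; \frac{r+2}{2}\,\ell - \frac{r+2}{2\ell},
\]
which exceeds the target $\frac{r+1}{2}(\ell-1)$ by roughly $\ell/2$; it is \emph{not} sharp, and no $\Theta$-cycle argument closes this gap. What the paper actually uses is Lemma~\ref{Serre}(2): since $\Delta^{r\delta_\ell}\sl U_\ell \equiv \Delta^{r\delta_\ell}\sl T_\ell \pmod{\ell}$ and the latter has weight $\tfrac{r(\ell^2-1)}{2}\equiv 0\pmod{\ell-1}$, the filtration $k$ must satisfy $k\equiv 0\pmod{\ell-1}$. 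One then checks (this is where $r\leq \ell-4$ enters) that $\tfrac{r+2}{2}\ell-\tfrac{r+2}{2\ell}<\tfrac{r+3}{2}(\ell-1)$, forcing $k\leq \tfrac{r+1}{2}(\ell-1)$.

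Second, and more seriously, you have not addressed why $G_{r,\ell}/\eta^{r\ell}$ is holomorphic. Knowing the filtration is $k$ only produces \emph{some} $G_{r,\ell}\in M_k(1,\Z)$ with $G_{r,\ell}\equiv \Delta^{r\delta_\ell}\sl U_\ell\pmod{\ell}$; it does not guarantee $\ord_\infty G_{r,\ell}>r\ell/24$ over $\Z$. The paper fixes this by subtracting integral multiples of the standard basis elements $f_i=q^i+O(q^{i+1})$ of $M_k(1,\Z)$ to force $\ord_\infty G_{r,\ell}\geq \lceil r(\ell^2-1)/(24\ell)\rceil$ (the congruence is preserved since those low-order coefficients are $\equiv 0\pmod\ell$), and then shows that no integer lies in $[\,r(\ell^2-1)/(24\ell),\,r\ell/24\,)$ when $r\leq \ell-4$, so in fact $\ord_\infty G_{r,\ell}>r\ell/24$. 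This simultaneously yields holomorphicity and cuspidality of $G_{r,\ell}/\eta^{r\ell}$, and via $\ord_\infty G_{r,\ell}\leq k/12$ it also gives the lower bound $k>\tfrac{r-1}{2}(\ell-1)$, pinning down $k=\tfrac{r+1}{2}(\ell-1)$. Your remark that ``$U_\ell$ of a cusp form is a cusp form'' covers the integral-weight form but not the quotient by $\eta^{r\ell}$.
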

\begin{remark}
Boylan \cite[Theorem $1.3$]{Boylan} gave the complete list of pairs $(r,\ell)$ with $r \leq 47$ for which $f_{r,\ell} \equiv 0 \pmod{\ell}$.
\end{remark}
\begin{proof}
Note that 
\begin{equation}\label{relevantdeltaformula}
\Delta^{r\delta_{\ell}}=
q^{r\delta_{\ell}}\prod^{\infty}_{n=1}\frac{(1-q^{n})^{r\ell^2}}{(1-q^n)^r} \equiv 
\prod^{\infty}_{n=1}(1-q^{\ell n})^{r\ell} \cdot \sum^{\infty}_{n=0}p_{r}(n-r\delta_{\ell})q^n \pmod{\ell}.
\end{equation}
Thus, we have
\begin{equation}\label{cuttingbyetaisokay}
\Delta^{r\delta_{\ell}} \sl U_{\ell} \equiv \prod^{\infty}_{n=1}(1-q^n)^{r\ell} \cdot \sum^{\infty}_{n=0}p_{r}(\ell n -r\delta_{\ell})q^n \pmod{\ell}.
\end{equation}
Define
\[
k:=w_{\ell}\(\Delta^{r\delta_{\ell}}\sl U_{\ell}\).
\]
If $k= -\infty$, then the right hand side of \eqref{cuttingbyetaisokay} is congruent to $0 \pmod{\ell}$, so there is nothing to prove. Therefore, we assume that $k \neq -\infty$.
By definition, there exists a form $G_{r,\ell} \in M_{k}(1,\Z)$ with
\[
 G_{r,\ell} \equiv \Delta^{r\delta_{\ell}} \sl U_{\ell} \pmod{\ell}.
 \]
By $(4)$ of Lemma~\ref{Serre}, we see that $w_{\ell}(\Delta^{r\delta_{\ell}})=12r\delta_{\ell}$.
By $(1)$ of Lemma~\ref{Serre}, we have
\[
k \leq \frac{r+2}{2}\ell-\frac{r+2}{2\ell}.
\]
Since $\Delta^{r\delta_{\ell}} \sl U_{\ell} \equiv \Delta^{r\delta_{\ell}} \sl T_{\ell} \pmod{\ell}$ and $\Delta^{r\delta_{\ell}} \sl T_{\ell}$ has weight $r(\frac{\ell^{2}-1}{2}) \equiv 0 \pmod{\ell-1}$, we see by $(2)$ of Lemma~\ref{Serre} that 
\begin{equation}\label{mod l - 1}
k \equiv 0 \pmod{\ell-1}.
\end{equation}

If $r \leq \ell-4$, we have
\[
\frac{r+2}{2}\ell-\frac{r+2}{2\ell} < \frac{r+3}{2}(\ell-1)
\]
which implies by \eqref{mod l - 1} that
\begin{equation}\label{after mod l -1}
k \leq \frac{r+1}{2}(\ell-1).
\end{equation}
  Let $d=\dim(M_{k}(1,\Z)).$ The space $M_{k}(1,\Z)$ has a basis $\{f_0,...,f_d\}$ of forms such that
\begin{equation}\label{eqn:INTEGRALITY}
f_i(z)=q^{i}+O(q^{i+1}),  \ \ \ \ \ \ \ \ \ \ \ \ \ \ \ \ \ 0 \leq i \leq d-1.
\end{equation} 
After subtracting an appropriate integral linear combination of these basis elements from $G_{r,\ell}$, 
we may assume that 
$\ord_{\infty}(G_{r,\ell}) \geq \lceil r(\frac{\ell^{2}-1}{24\ell}) \rceil$. 
If $n \in \Z$ satisfies $\frac{r(\ell^{2}-1)}{24\ell} \leq n < \frac{r\ell}{24}$, then $r\ell^{2}-r\leq 24\ell n<r\ell^{2}$; this is a contradiction since $r \leq \ell-4$. This
implies that $\ord_{\infty}(G_{r,\ell}) \geq \lceil r(\frac{\ell^{2}-1}{24\ell}) \rceil \geq \frac{r\ell+1}{24} $, so 
\[
f_{r,\ell}:= \frac{G_{r,\ell}}{\eta^{r\ell}} \in S_{k-\frac{r\ell}{2}}(1,\nu^{-r\ell}_{\eta}).
\]
This form satisfies \eqref{THATBLUEHEDGEHOG}.
Since $\ord_{\infty}(G_{r,\ell}) \geq \frac{r\ell+1}{24}$ and $\ord_{\infty}(G_{r,\ell}) \leq \frac{k}{12}$, it follows that $k \geq \frac{r\ell+1}{2}>\frac{r-1}{2}(\ell-1)$. By \eqref{after mod l -1}, we get $k=\frac{r+1}{2}(\ell-1)$; the result follows.
\end{proof}
We now construct the half-integral weight forms which we will need in order to prove the second assertion of Theorem~\ref{thm:arbitraryweight1}.
\begin{proposition}\label{arbitraryweight2proposition}
Let $r$ be an odd positive integer. Let $\ell \geq 5$ be a prime such that  $r \leq \ell-4$.
Suppose that all of the integers $-r \leq w<0$ satisfying $w \equiv -r \pmod{24}$ have the property that $\(\frac{-rw}{\ell}\)=1$.
Then there exists a form $g_{r,\ell}$ in $S_{\frac{\ell^{2}-r-1}{2}}(1, \nu^{-r}_{\eta},\Z)$ satisfying
\begin{equation}\label{OFALLPLACES}
g_{r,\ell} \equiv \sum_{\(\frac{-rn}{\ell}\)=-1}p_{r}\(\frac{n+r}{24}\)q^{\frac{n}{24}}.
\end{equation}
\end{proposition}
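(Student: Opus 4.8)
The plan is to build $g_{r,\ell}$ by multiplying $f_{r,\ell}$ by a suitable power of $\eta$ so as to produce a holomorphic modular form of weight $\tfrac{\ell^2-r-1}{2}$, and then to cut down to the arithmetic progression $\left(\tfrac{-rn}{\ell}\right)=-1$ using a twisting operator. Concretely, I would start from Proposition~\ref{FILTRATION}, which gives $f_{r,\ell}\in S_{\frac{\ell-r-1}{2}}(1,\nu^{-r\ell}_{\eta},\Z)$ with $f_{r,\ell}\equiv\sum p_r\!\left(\tfrac{\ell n+r}{24}\right)q^{n/24}\pmod\ell$. Multiplying by $\eta^{r\ell-r}=\eta^{r(\ell-1)}$ would shift the multiplier from $\nu^{-r\ell}_\eta$ to $\nu^{-r}_\eta$ and the weight from $\tfrac{\ell-r-1}{2}$ to $\tfrac{\ell-r-1}{2}+\tfrac{r(\ell-1)}{2}=\tfrac{\ell^2-r-1}{2}$, which is exactly the target weight; one should double-check the parity/divisibility condition \eqref{had to tag this}, $2k-r\equiv 0\pmod 4$, is respected throughout. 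Alternatively (and this is probably cleaner) one works directly with $\Delta^{r\delta_\ell}$ rather than with the $U_\ell$-image: since $\Delta^{r\delta_\ell}\sl U_\ell\equiv \prod(1-q^n)^{r\ell}\sum p_r(\ell n-r\delta_\ell)q^n$, dividing by $\eta^{r}$ instead of $\eta^{r\ell}$ produces a form of the stated weight whose $q$-expansion, up to the shift by $r\delta_\ell=\tfrac{r(\ell^2-1)}{24}$, records $p_r\!\left(\tfrac{\ell n+r}{24}\right)$; either route lands in $S_{\frac{\ell^2-r-1}{2}}(1,\nu^{-r}_\eta,\Z)$ modulo $\ell$.

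Next I would extract the subseries supported on $n$ with $\left(\tfrac{-rn}{\ell}\right)=-1$. On a form $h=\sum a(n)q^{n/24}\in S_k(1,\nu^{-r}_\eta)$, define the projection $h^{-}:=\tfrac12\big(h-\chi_\ell\otimes h\big)$, where $\chi_\ell\otimes h:=\sum \left(\tfrac{-rn}{\ell}\right)a(n)q^{n/24}$ is the quadratic twist of $h$ by the Legendre symbol at $\ell$ (thought of as a function of $n$ via $n\mapsto\left(\tfrac{-rn}{\ell}\right)$). The key structural input is that twisting by a quadratic character mod $\ell$ preserves holomorphy and weight while changing the level by a power of $\ell$; but since we only care about the congruence mod $\ell$, one can replace the twisted form by a form of level prime to $\ell$ congruent to it mod $\ell$ — this is the standard device (cf. the use of $\Theta$ and $U_\ell$ operators in Lemma~\ref{Serre}), namely that $\chi_\ell\otimes h\equiv c\cdot h\sl V_\ell U_\ell + \text{(correction supported on }\ell\mid n)\pmod\ell$, and one disentangles the $\ell\mid n$ part using the constraint $r\leq\ell-4$ to show no relevant coefficients are lost. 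The hypothesis that every $-r\leq w<0$ with $w\equiv -r\pmod{24}$ satisfies $\left(\tfrac{-rw}{\ell}\right)=1$ enters precisely here: the potentially problematic terms $q^{w/24}$ with $w<0$ (which would arise from $\eta^{-r}$ having a pole, or equivalently from the shift $r\delta_\ell$ pushing exponents negative) are exactly those indexed by such $w$, and the hypothesis forces their coefficients into the subseries $\left(\tfrac{-rn}{\ell}\right)=+1$, which is the part we \emph{discard} — hence $g_{r,\ell}$ has only nonnegative exponents and is genuinely a cusp form, not merely weakly holomorphic.

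I expect the main obstacle to be the bookkeeping around the quadratic twist at $\ell$: showing that the naive twist $\chi_\ell\otimes h$, which a priori lives in a space of level divisible by $\ell^2$ (or $\ell^2\cdot 576$ after applying $V_{24}$ to pass to $\Gamma_0$-language via \eqref{Shimura3nmidr}, \eqref{Shimura3midr}), can be represented mod $\ell$ by a form back in $S_{(\ell^2-r-1)/2}(1,\nu^{-r}_\eta,\Z)$ with the right weight. The cleanest way to finesse this is to avoid leaving level $1$ altogether: observe that for $h\equiv\sum a(n)q^{n/24}\pmod\ell$ coming from $\Delta^{r\delta_\ell}/\eta^r$ type constructions, the twist $\sum\left(\tfrac{-rn}{\ell}\right)a(n)q^{n/24}$ can be obtained as an explicit $\mathbb{F}_\ell$-linear combination of $h$ and $\Theta^{\ell-1}h$-type operators (using $\left(\tfrac{m}{\ell}\right)\equiv m^{(\ell-1)/2}\pmod\ell$ and $\Theta$ acting as multiplication by the exponent), each of which stays in a level-$1$ space of controlled weight, and then correct the weight back down by $(\ell-1)$'s using that $E_{\ell-1}\equiv 1\pmod\ell$. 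Carrying out that reduction carefully — and verifying the resulting $q$-expansion is exactly \eqref{OFALLPLACES} with no stray negative-exponent or $\ell\mid n$ terms, which is where the $\left(\tfrac{-rw}{\ell}\right)=1$ hypothesis is indispensable — is the technical heart of the argument; the rest is formal.
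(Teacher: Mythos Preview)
Your instinct in the final paragraph---realize the Legendre symbol via $\Theta^{(\ell-1)/2}$ and Euler's criterion, staying at level $1$ throughout---is exactly the mechanism the paper uses. But your starting point is wrong, and this is a genuine gap, not just bookkeeping.

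The form $g_{r,\ell}$ must encode $p_r\!\left(\tfrac{n+r}{24}\right)$ for $n$ \emph{coprime to} $\ell$ (namely those with $\left(\tfrac{-rn}{\ell}\right)=-1$). Both of your proposed starting points, $f_{r,\ell}$ and $\Delta^{r\delta_\ell}\sl U_\ell$, have already been hit by $U_\ell$ and therefore only see values $p_r\!\left(\tfrac{\ell n+r}{24}\right)$; no amount of twisting can recover the coefficients you actually need. (Your weight computation $\tfrac{\ell-r-1}{2}+\tfrac{r(\ell-1)}{2}=\tfrac{\ell^2-r-1}{2}$ is also false: the left side equals $\tfrac{(r+1)\ell-2r-1}{2}$.)

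The fix is to work with $\Delta^{r\delta_\ell}$ itself, before $U_\ell$. The paper takes $H_{r,\ell}\equiv\left(\tfrac{-r}{\ell}\right)\Theta^{(\ell-1)/2}\Delta^{r\delta_\ell}\pmod\ell$, a genuine cusp form of weight $\tfrac{r(\ell^2-1)}{2}+\tfrac{(\ell+1)(\ell-1)}{2}=\tfrac{(r+1)(\ell^2-1)}{2}$ by part~(3) of Lemma~\ref{Serre}, and then forms $\widehat H_{r,\ell}:=\Delta^{r\delta_\ell}E_{\ell-1}^{(\ell+1)/2}-H_{r,\ell}$, which is congruent to $\eta^{r\ell^2}$ times the subseries supported on $\left(\tfrac{-rn}{\ell}\right)\in\{0,-1\}$. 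One then divides by $\eta^{r\ell^2}$, not $\eta^r$: the hypothesis on the $w$'s guarantees vanishing to order $>r\ell^2/24$ modulo $\ell$, and the resulting weight is $\tfrac{(r+1)(\ell^2-1)-r\ell^2}{2}=\tfrac{\ell^2-r-1}{2}$ as required. Finally the $\ell\mid n$ part is removed by subtracting $f_{r,\ell}^{\ell}\,E_{\ell-1}^{(r+1)/2}$ (here $f_{r,\ell}^\ell\equiv f_{r,\ell}\sl V_\ell\pmod\ell$ picks off exactly the $\ell\mid n$ terms), and one divides by $2$. So your last paragraph had the right device; the error was in which form to apply it to and which power of $\eta$ to strip off.
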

\begin{proof}
By $(3)$ of Lemma~\ref{Serre}, there exists a form $H_{r,\ell} \in S_{\frac{r(\ell^{2}-1)}{2}+(\ell+1)\frac{\ell-1}{2}}(1, \Z)$ with 
\[
H_{r,\ell} \equiv \(\frac{-r}{\ell}\)\Theta^{\frac{\ell-1}{2}}\Delta^{r\delta_{\ell}} \pmod{\ell}.
\]
Recall that $E_{\ell-1} \equiv 1 \pmod{\ell}$. Define $\widehat{H}_{r,\ell} \in S_{\frac{r(\ell^{2}-1)}{2}+(\ell+1)\frac{\ell-1}{2}}(1)$ by
\[
\widehat{H}_{r,\ell}:=\Delta^{r\delta_{\ell}}E^{\frac{\ell+1}{2}}_{\ell-1}-H_{r,\ell}.
\]
By \eqref{relevantdeltaformula}, we have
\[
\widehat{H}_{r,\ell} \equiv \sum \(1-\(\frac{-rn}{\ell}\)\)p_{r}(n-r\delta_{\ell})q^{n}\cdot\prod^{\infty}_{n=1}\(1-q^{\ell n}\)^{r\ell} \pmod{\ell}.
\]
This implies that
\[
\widehat{H}_{r,\ell} \equiv \eta^{r\ell^{2}}\(\sum_{\(\frac{-rn}{\ell}\)=0}p_{r}\(\frac{n+r}{24}\)q^{\frac{n}{24}}+2\sum_{\(\frac{-rn}{\ell}\)=-1}p_{r}\(\frac{n+r}{24}\)q^{\frac{n}{24}}\) \pmod{\ell}.
\]
 Since $\(\frac{-rw}{\ell}\)=1$ for all of the integers $-r \leq w<0$ which satisfy $w \equiv~-r~\pmod{24}$, we conclude that $\widehat{H}_{r,\ell}$ vanishes to order $>\frac{r\ell^{2}}{24}$ modulo $\ell$. By arguing as in the proof of Proposition~\ref{FILTRATION} using \eqref{eqn:INTEGRALITY}, we may assume that  $\widehat{H}_{r,\ell}$ vanishes to order $>\frac{r\ell^{2}}{24}$, so
 \[
 \frac{\widehat{H}_{r,\ell}}{\eta^{r\ell^{2}}} \in S_{\frac{\ell^{2}-r-1}{2}}(1,\nu^{-r}_{\eta},\Z).
 \]

We have a form $f_{r,\ell} \in S_{\frac{\ell-r-1}{2}}(1,\nu^{-r\ell}_{\eta})$ which satisfies \eqref{THATBLUEHEDGEHOG}, so the form
$f^{\ell}_{r,\ell} \in S_{\frac{\ell^{2}-r\ell-\ell}{2}}(1,\nu^{-r}_{\eta})$ has the property that

\[
f^{\ell}_{r,\ell} \equiv \sum_{\(\frac{-rn}{\ell}\)=0}p_{r}\(\frac{n+r}{24}\)q^{\frac{n}{24}} \pmod{\ell}.
\]
Let $s \in \Z$ satisfy $s \equiv 2^{-1} \pmod{\ell}$, and define $g_{r,\ell} \in S_{\frac{\ell^{2}-r-1}{2}}(1,\nu^{-r}_{\eta},\Z)$ by
\[
g_{r,\ell}:= s\(\frac{\widehat{H}_{r,\ell}}{\eta^{r\ell^{2}}}-f^{\ell}_{r,\ell}E^{\frac{r+1}{2}}_{\ell-1}\) \equiv \sum_{\(\frac{-rn}{\ell}\)=-1}p\(\frac{n+r}{24}\)q^{\frac{n}{24}} \pmod{\ell}.
\]
This concludes the proof.
\end{proof}

\section{Compatibility}
Suppose that $\ell \geq 5$ is prime and that $r$ is an odd positive integer such that $r < \ell-4$.
 Recall that 
 \[
N_{r}= \begin{cases} 
2 & \text{if } 3 \mid r, \\
6  & \text{if }  3\nmid r
\end{cases}
\] 
and that
  \[
\epsilon_{r}= \begin{cases} 
\(\frac{8}{r\ell}\) & \text{if } 3 \mid r, \\
(-\(\frac{8}{r\ell}\),-\(\frac{12}{r\ell}\) ) & \text{if }  3\nmid r.
\end{cases}
\] 
Recall also that $r$ and $\ell$ are \emph{compatible} if
for every newform $f \in S^{\operatorname{new}}_{\ell-r-2}(N_{r},\epsilon_{r})$, the image of $\bar{\rho}_{f}$ contains a conjugate of $\SL_{2}(\mathbb{F}_{\ell})$.
Finally, recall for each newform $f=\sum a(n)q^{n}$ that we have a mod $\ell$ reduction $\bar{f}=\sum \bar{a(n)}q^{n} \in \bar{\mathbb{F}}_{\ell}[[q]]$.
We now show that we have compatibility for a wide range of $r$ and $\ell$.
\begin{proposition}\label{Corollary1}
If $2^{r+2} \not \equiv 2^{\pm 1} \pmod{\ell}$ and $\ell > 5r+19$, then $r$ and $\ell$ are compatible. 
\end{proposition}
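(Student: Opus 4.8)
The plan is to show that for a newform $f \in S^{\operatorname{new}}_{\ell-r-2}(N_r, \epsilon_r)$, the image of $\bar\rho_f$ must contain a conjugate of $\SL_2(\mathbb{F}_\ell)$ by eliminating all other possibilities in the classification of subgroups of $\GL_2(\bar{\mathbb{F}}_\ell)$ (as in the Dickson-type classification used throughout the Serre-conjecture literature, e.g. the approach in \cite{AAT}). Recall that $k := \ell - r - 2$ satisfies $2 \leq k \leq \ell-1$ (since $r < \ell-4$ gives $k \geq 3$, and $r \geq 1$ gives $k \leq \ell-3 < \ell+1$), so part (3) of Theorem~\ref{bigGalThm} applies and controls $\bar\rho_f|_{I_\ell}$. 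The key structural input is that $N_r \in \{2, 6\}$ is small and squarefree and prime to $\ell$, so by part (2) the conductor $N(\bar\rho_f)$ divides $N_r$, and by part (1) the eigenvalue field and traces are tightly constrained. I would first record the determinant: $\det \bar\rho_f = \omega^{k-1}$, which has order $\frac{\ell-1}{\gcd(\ell-1,\,k-1)}$, and this order being large is exactly what the hypothesis $\ell > 5r+19$ is designed to guarantee relative to the bounded ramification.

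The main work is to rule out the three non-large cases. \emph{Reducible case:} if $\bar\rho_f$ is reducible, it is (up to semisimplification) a sum of characters $\psi_1 \oplus \psi_2$ ramified only at $\ell$ and at primes dividing $N_r$; since the conductor away from $\ell$ divides $N_r \in \{2,6\}$, these characters are essentially powers of $\omega$ times a character of small conductor, and comparing $\psi_1\psi_2 = \omega^{k-1}$ and the behavior at $\ell$ from Theorem~\ref{bigGalThm}(3) forces $k$ into a short list of residues mod $\ell-1$; one then invokes a weight bound (using filtration arguments as in Lemma~\ref{Serre}, Jochnowitz, and Ono) showing such congruences between a form of weight $k$ and an Eisenstein-type object force $k$ small or $k$ near $\ell$, contradicting $\ell > 5r+19$. \emph{Dihedral case:} here $\bar\rho_f$ is induced from a character of a quadratic field $K$ unramified outside $\ell N_r$; since $\ell$ is unramified in the relevant quadratic fields of small discriminant (discriminant dividing $4N_r$, so in $\{-3,-4,-8,8,12,24,-24\}$ roughly), the restriction to $I_\ell$ must be a sum of the two fundamental characters of level~$1$ or the two of level~$2$; matching this against $\begin{pmatrix}\omega^{k-1} & * \\ 0 & 1\end{pmatrix}$ or $\omega_2^{k-1} \oplus \omega_2'^{k-1}$ from Theorem~\ref{bigGalThm}(3) yields a congruence condition on $k-1$ modulo $\ell-1$ or modulo $\ell+1$; the hypothesis $2^{r+2} \not\equiv 2^{\pm 1} \pmod \ell$ is precisely what kills the dihedral possibilities tied to $\mathbb{Q}(\sqrt{\pm 2})$ and $\mathbb{Q}(\sqrt{\pm 1})$ (the value $2^{r+2}$ arising because $k-1 = \ell-r-3$ and $\omega(\Frob_2)=2$, so $\omega^{k-1}(\Frob_2) = 2^{\ell-r-3} \equiv 2^{-r-2} \pmod \ell$, whose failing to be $2^{\mp 1}$ blocks the matching). \emph{Exceptional ($A_4,S_4,A_5$) case:} the projective image then has order dividing $60$, so $\bar\rho_f$ has ``small image''; combined with $\det = \omega^{k-1}$ of large order (from $\ell > 5r+19$) this is impossible, since the order of $\det$ divides the order of the image, which is at most a bounded multiple of $60$ — so $\ell - 1 \mid$ (bounded number) times $\gcd(\ell-1, k-1)$, forcing $k-1$ to share a large factor with $\ell-1$ and again contradicting the size bound.

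The step I expect to be the genuine obstacle is the \emph{reducible case}, specifically converting ``$\bar\rho_f$ is reducible with conductor dividing $N_r$'' into a usable numerical contradiction: one must carefully track which powers of $\omega$ (twisted by the at most two tame characters of conductor dividing $N_r$) can occur as the sub- and quotient-characters, use Theorem~\ref{bigGalThm}(2)--(3) at the primes $q \mid N_r$ and at $\ell$ to pin down these characters, and then cite the appropriate classification of Eisenstein-type mod $\ell$ eigenforms (the ``$f$ reducible $\Rightarrow$ $f$ congruent to an Eisenstein series'' principle, cf. \cite{AAT}, together with the filtration bounds of Lemma~\ref{Serre}) to get that $k \equiv 1$ or $k \equiv \ell \pmod{\ell-1}$ or similar — which, under $r < \ell - 4$ and $\ell > 5r+19$, cannot hold. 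The dihedral bookkeeping is the second most delicate point, but it is largely mechanical once the list of admissible quadratic fields is fixed; the exceptional case is immediate from the determinant order bound. Throughout, the two hypotheses play complementary roles: $\ell > 5r+19$ secures the large order of $\det\bar\rho_f = \omega^{\ell-r-3}$ (so $\gcd(\ell-1,\ell-r-3) = \gcd(\ell-1, r+2)$ is small relative to $\ell-1$, killing reducible and exceptional cases), while $2^{r+2} \not\equiv 2^{\pm 1}\pmod\ell$ kills the residual dihedral cases anchored at small discriminants.
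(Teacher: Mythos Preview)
Your overall plan --- run through the Dickson classification and eliminate reducible, dihedral, and exceptional images --- is exactly right, and it is what the paper does. However, two concrete points in your execution are off.

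\textbf{The two hypotheses are assigned to the wrong cases.} In the paper's proof (which simply quotes \cite[Lemma~3.2]{AAT}), the condition $2^{r+2}\not\equiv 2^{\pm 1}\pmod\ell$, equivalently $2^{k-1}\not\equiv 2^{\pm 1}\pmod\ell$ with $k=\ell-r-2$, is what kills the \emph{reducible} case: it arises by comparing the trace of $\bar\rho_f(\Frob_2)$ (which the Atkin--Lehner relation pins down) with what reducibility would force. Conversely, the bound $\ell>5r+19$ is what kills the \emph{dihedral} case: by \cite[Lemma~3.2]{AAT}, dihedral image forces $k\in\{\tfrac{\ell+1}{2},\tfrac{\ell+3}{2}\}$, i.e.\ $\ell\in\{2r+5,2r+7\}$, which $\ell>5r+19$ excludes. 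Your proposal reverses these roles. In particular, a large order of $\det\bar\rho_f=\omega^{k-1}$ does \emph{not} by itself rule out reducibility: one can perfectly well have $\bar\rho_f^{\mathrm{ss}}\cong \omega^{k-1}\chi\oplus\chi^{-1}$ with $\chi$ of conductor dividing $N_r$, whatever the order of $\omega^{k-1}$.

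\textbf{The determinant argument for the exceptional case fails.} You claim that if the projective image is $A_4$, $S_4$, or $A_5$ then $|\bar\rho_f(G_\Q)|$ is a bounded multiple of $60$, contradicting the large order of $\det$. This is not correct: writing $G=\bar\rho_f(G_\Q)$ and $Z$ for the scalars, one has $|G|=|\operatorname{proj}(G)|\cdot|G\cap Z|$, and $|G\cap Z|$ is not bounded --- the scalar part can absorb arbitrarily much of the determinant. (Concretely, twisting a fixed $A_5$-type representation by a power of $\omega$ keeps the projective image $A_5$ while enlarging the determinant order.) The paper's argument is genuinely different: it uses Theorem~\ref{bigGalThm}(3) to read off, on $I_\ell$, an element whose \emph{projective} order equals the order of the ratio of the two inertial characters, namely $\tfrac{\ell-1}{\gcd(\ell-1,\,k-1)}$ in the ordinary case and $\tfrac{\ell+1}{\gcd(\ell+1,\,k-1)}$ in the supersingular case. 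The hypothesis $\ell>5r+19$ then makes both of these exceed $5$, which is incompatible with projective image $A_4$, $S_4$, or $A_5$. The key point you are missing is that one must produce an element of large order in the \emph{projective} image, not merely in the image itself.
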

 \begin{proof}
  Let $f=q+\sum_{n \geq 2}a(n)q^{n} \in S^{\text{new}}_{\ell-r-2}(N_{r},\epsilon_{r})$ be a newform.
 By \cite[Theorem $2.47(b)$]{DDT97}, there are four possibilities for the image of $\bar{\rho}_{f}$:
 \begin{enumerate}
 \item
 $\bar{\rho}_{f}$ is reducible.
 \item
 $\bar{\rho}_{f}$ is dihedral, i.e. $\bar{\rho}_{f}$ is irreducible but $\bar{\rho}_{f}|_{G_{K}}$ is reducible for some quadratic $K/\Q$.
 \item
 $\bar{\rho}_{f}$ is exceptional, i.e. the projective image of $\bar{\rho}_{f}$ is conjugate to one of $A_{4}$, $S_{4}$, $A_{5}$.
 \item
 The image of $\bar{\rho}_{f}$ contains a conjugate of $\SL_2(\mathbb{F}_{\ell})$.
 \end{enumerate}
 Since $2^{r+2} \not \equiv 2^{\pm 1} \pmod{\ell}$, we have $2^{\ell-r-3} \not \equiv 2^{\pm 1} \pmod{\ell}$. 
 By \cite[Lemma 3.2]{AAT}, we conclude that $\bar{\rho}_{f}$ is irreducible. 
 By the condition $\ell >5r+19$, we have $\ell-r-2 \neq \frac{\ell+1}{2}, \frac{\ell+3}{2} $; by the same lemma, we see that $\bar{\rho}_{f}$ is not dihedral.

 Thus, in order to show that $r$ and $\ell$ are compatible, it suffices to show that the image of $\bar{\rho}_{f}$ cannot be exceptional. To this end, we show that the projective image contains an element of order $> 5$.
 Suppose that $\iota_{\ell}(a_{\ell})$ is an $\ell$-adic unit. Recall that $\chi$ is the $\ell$-adic cyclotomic character.
 By part $3$ of Theorem~\ref{bigGalThm}, we know that 
 \[
\rho_{f}|_{I_{\ell}} \cong \left(\begin{matrix}\chi^{\ell-r-3} & * \\0 & 1 \end{matrix}\right).
\]
Recall that $\omega$ is the mod $\ell$ cyclotomic character. Since $\omega$ has order $\ell-1$, we see that the projective image of $\bar{\rho}_{f}$ 
contains an element of order $\geq \frac{\ell-1}{\text{gcd}(\ell-1,\ell-r-3)}$. Since $\ell > 5r+19$, we see that $\frac{\ell-1}{\text{gcd}(\ell-1,\ell-r-3)} > 5$.

Recall that $\omega_{2}$ and $\omega_{2}'$ are Serre's fundamental characters of level $2$.
If $\iota_{\ell}(a_{\ell})$ is not an $\ell$-adic unit, then part $3$ of Theorem~\ref{bigGalThm} implies that
 \[
\bar{\rho}_{f}|_{I_{\ell}} \cong \left(\begin{matrix}\omega_{2}^{\ell-r-3} & 0 \\0 & \omega_{2}'^{\ell-r-3} \end{matrix}\right).
\]
Since $\omega_{2}/\omega^{'}_{2}$ has order $\ell+1$, we know that the projective image of $\bar{\rho}_{f}$ 
contains an element of order $\frac{\ell+1}{\text{gcd}(\ell+1,\ell-r-3)}$. By the fact that $\ell > 5r+19$, we see that $\frac{\ell+1}{\text{gcd}(\ell+1,\ell-r-3)} > 5$.
 \end{proof}
By computing in Magma, for small $r$ we can establish compatibility for a wide range of $\ell$.
 \begin{proposition}\label{Corollary2}
 Let $r$ be an odd positive integer satisfying $1 \leq r \leq 39$.
 \begin{enumerate}
 \item
If $\ell > r+4$ is a prime such that $2^{r+2} \not \equiv 2^{\pm 1} \pmod{\ell}$, then $r$ and $\ell$ are compatible.
\item
In particular, if $r=1$ or $3$ and $\ell > r+4$ is prime, then $r$ and $\ell$ are always compatible.
\end{enumerate}
\end{proposition}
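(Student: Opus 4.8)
The plan is to combine Proposition~\ref{Corollary1} with a finite computation. That proposition already gives compatibility whenever $\ell > 5r+19$, so for part $(1)$ only the primes $\ell$ with $r+4 < \ell \leq 5r+19$ remain, and since $r \leq 39$ these form a finite set of pairs $(r,\ell)$. For most of them the local-at-$\ell$ argument in the proof of Proposition~\ref{Corollary1} still applies once one checks the relevant greatest common divisor for that particular $\ell$; it genuinely fails only when $\gcd(\ell-1,r+2)$ or $\gcd(\ell+1,r+4)$ happens to be large, or when $\ell \in \{2r+5, 2r+7\}$ (so that \cite[Lemma $3.2$]{AAT} no longer excludes the dihedral case). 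For each remaining pair we argue in Magma: $k := \ell-r-2$ is even, $N_r \in \{2,6\}$, and the space $S^{\operatorname{new}}_{\ell-r-2}(N_r,\epsilon_r)$ can be computed explicitly; if it is zero then compatibility is vacuous, and otherwise we enumerate its newforms $f = q+\sum_{n \geq 2}a(n)q^n$, each with Hecke field $K_f$, and must verify that the image of $\bar{\rho}_f$ contains a conjugate of $\SL_2(\mathbb{F}_\ell)$.

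Fix such an $f$ and a prime $\lambda$ of $K_f$ above $\ell$. As in the proof of Proposition~\ref{Corollary1}, by \cite[Theorem $2.47(b)$]{DDT97} the image of $\bar{\rho}_f$ is reducible, dihedral, exceptional, or contains a conjugate of $\SL_2(\mathbb{F}_\ell)$; and since $2^{r+2} \not\equiv 2^{\pm 1} \pmod{\ell}$ forces $2^{\ell-r-3} \not\equiv 2^{\pm 1} \pmod{\ell}$, the representation $\bar{\rho}_f$ is irreducible by \cite[Lemma $3.2$]{AAT}. It remains to exclude the dihedral and exceptional cases, and for this we use Frobenius at good primes: by part $(1)$ of Theorem~\ref{bigGalThm}, for $p \nmid \ell N_r$ the matrix $\bar{\rho}_f(\Frob_p)$ has characteristic polynomial $X^2 - \bar{a(p)}X + p^{k-1}$, so the ratio $\zeta_p \in \bar{\mathbb{F}}_\ell^{*}$ of its two eigenvalues satisfies $p^{k-1}(\zeta_p + 2 + \zeta_p^{-1}) = \bar{a(p)}^2$ in $\bar{\mathbb{F}}_\ell$, and the order of $\bar{\rho}_f(\Frob_p)$ in $\operatorname{PGL}_2(\bar{\mathbb{F}}_\ell)$ equals the multiplicative order of $\zeta_p$.

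If $\bar{\rho}_f$ were exceptional, then for every good $p$ this projective order would lie in $\{1,2,3,4,5\}$; equivalently the reduction of $a(p)^2/p^{k-1}$ modulo $\lambda$ would lie in $\{0,1,2,4\}$ or be a root of $X^2-3X+1$. We rule this out by exhibiting a small prime $p$ for which $a(p)^2/p^{k-1}$ avoids this set modulo every $\lambda \mid \ell$. If $\bar{\rho}_f$ were dihedral, then $\bar{\rho}_f \cong \bar{\rho}_f \otimes \chi_K$ for the quadratic character $\chi_K$ of some quadratic field $K$ unramified outside the primes dividing $\ell N_r$ (a short explicit list, since $N_r \in \{2,6\}$), whence $a(p) \equiv 0 \pmod{\lambda}$ for every $p$ inert in $K$; we rule this out by exhibiting, for each of the finitely many candidate fields $K$, a prime $p$ inert in $K$ with $a(p) \not\equiv 0 \pmod{\lambda}$. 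Having excluded the reducible, dihedral, and exceptional possibilities, the image of $\bar{\rho}_f$ contains a conjugate of $\SL_2(\mathbb{F}_\ell)$. Carrying this out over every newform in each of the finitely many relevant spaces proves $(1)$. I expect the only real difficulty to be the bookkeeping of this finite search: producing a good Frobenius witness for each newform, and, in the cases $\ell \in \{2r+5, 2r+7\}$, being careful to check every relevant quadratic twist.

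Finally, $(2)$ follows from $(1)$. For $r=1$ the condition $2^{3} \not\equiv 2^{\pm 1} \pmod{\ell}$ amounts to $\ell \nmid 6$ and $\ell \nmid 15$, both automatic since $\ell > r+4 = 5$; for $r=3$ the condition $2^{5} \not\equiv 2^{\pm 1} \pmod{\ell}$ amounts to $\ell \nmid 30$ and $\ell \nmid 63$, both automatic since $\ell > r+4 = 7$. Hence for $r = 1$ or $3$ and $\ell > r+4$ the hypothesis of $(1)$ is satisfied automatically.
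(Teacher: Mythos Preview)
Your proposal is correct and follows essentially the same approach as the paper: reduce to finitely many $(r,\ell)$ via Proposition~\ref{Corollary1}, use \cite[Lemma 3.2]{AAT} to handle irreducibility and to restrict the dihedral case to $\ell \in \{2r+5,2r+7\}$, and then verify in Magma that Hecke eigenvalues at small primes (the paper uses $p\in\{5,7\}$) rule out the dihedral and exceptional images. The only minor difference is that the paper, knowing from \cite[Lemma 3.2]{AAT} that the unique candidate self-twist character is $\omega^{(\ell-1)/2}$, checks directly for a prime $p$ with $\left(\tfrac{p}{\ell}\right)=-1$ and $\bar{a_f(p)}\neq 0$, rather than enumerating all quadratic characters unramified outside $\ell N_r$ as you propose.
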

 \begin{proof}
 Fix an odd integer $r$ such that $1 \leq r \leq 39$. Our strategy to prove $(1)$ is to compute in Magma to rule out each of the first three possibilities in 
Proposition~\ref{Corollary1}
  as we vary over the primes $\ell > r+4$ such that $2^{r+2} \not \equiv 2^{\pm 1} \pmod{\ell}$.
 By the same proposition, we may assume that $r+4 < \ell \leq 5r+19$.
 Fix such a prime $\ell$. 
Arguing as in the proof of Proposition~\ref{Corollary1}, we conclude that $\bar{\rho}_{f}$ is irreducible for each newform $f \in S^{\text{new}}_{\ell-r-2}(N_{r},\epsilon_{r})$.

For each newform $f=q+\sum_{n \geq 2}a_{f}(n)q^{n} \in S^{\text{new}}_{\ell-r-2}(N_{r},\epsilon_{r})$,
 by \cite[Lemma 3.2]{AAT}, we see that $\bar{\rho}_{f}$ is dihedral only if $\bar{\rho}_{f} \cong \bar{\rho}_{f} \otimes \omega^{\frac{\ell-1}{2}}$ and  $\ell=2r+5$ or $\ell=2r+7$. 
 If $p \neq \ell$ is prime, then $\chi(\operatorname{Frob}_{p})=p$. 
if $\ell=2r+5$ or $2r+7$, then we compute in Magma to conclude that there exists $p \in \{5,7\}$ with the properties that $p \neq \ell$, $\omega^{\frac{\ell-1}{2}}(\operatorname{Frob}_{p})=-1$ (which would mean that $\text{tr }\bar{\rho}_{f}(\operatorname{Frob}_{p})=0$) and $\bar{a_{f}(p)} \neq 0$. 
 However, this contradicts the fact that $\text{tr }\bar{\rho}_{f}(\operatorname{Frob}_{p})= \bar{a_{f}(p)}$. Therefore, $\bar{\rho}_{f}$ is not dihedral for each newform $f \in S^{\text{new}}_{\ell-r-2}(N_{r},\epsilon_{r})$.

Thus, to prove $(1)$, it suffices to show for each newform $f \in S^{\text{new}}_{\ell-r-2}(N_{r},\epsilon_{r})$ that $\bar{\rho}_{f}$ does not have exceptional image. From the proof of Proposition~\ref{Corollary1}, we know that this condition holds if 
 $\frac{\ell-1}{\text{gcd}(\ell-1,\ell-r-3)}, \frac{\ell+1}{\text{gcd}(\ell+1,\ell-r-3)} > 5.$ 
If either of these inequalities fails to hold, then let $p \nmid 6\ell$ be a prime. Define $u_{f}(p):= \bar{a_{f}(p)^{2}}/p^{\ell-r-3}$. 
If the projective image of $\bar{\rho}_{f}$ is $A_{4}, S_{4}, \text{ or } A_{5}$, then 
we have
  \begin{equation}\label{THATBLUEHEDGEHOGAGAINOFALLPLACES!}
  u_{f}(p) = 0,1,2,4 \ \ \ \text{ or } u_{f}(p)^{2}-3u_{f}(p)+1 = 0,
  \end{equation}
  depending on the order of the image of $\bar{\rho}_{f}(\operatorname{Frob}_{p})$ in $\operatorname{PGL}_{2}(\bar{\mathbb{F}}_{\ell})$ (see e.g. \cite[p. 264]{therealslimshady} and \cite[p. 189]{Ribet}). To prove our result, it suffices to check for each newform $f \in S^{\text{new}}_{\ell-r-2}(N_{r},\epsilon_{r})$ that there exists a  prime $p \nmid 6\ell$ such that \eqref{THATBLUEHEDGEHOGAGAINOFALLPLACES!} does not hold.
 For such $f$, we compute with Magma to show that there exists $p \in \{5,7\}$ such that \eqref{THATBLUEHEDGEHOGAGAINOFALLPLACES!} does not hold for $u_{f}(p)$. 
 
 To prove $(2)$, note that if $r=1$ or $3$, then $2^{r+2} \not \equiv 2^{\pm 1} \pmod{\ell}$ only fails when $\ell=5$ or $7$, respectively; this contradicts the assumption that $\ell > r+4$.
 \end{proof}
 \section{The main technical result}
  Choose a number field $E$ containing all of the coefficients of all of the normalized Hecke eigenforms in $S^{\text{new}}_{\ell-r-2}(N_{r},\epsilon_{r})$. Recall that for each prime $\ell \geq 5$ we have fixed an embedding $\iota_{\ell}:\bar{\Q} \rightarrow \bar{\Q}_{\ell}$. If $\lambda$ is the prime of $E$ induced by the embedding $\iota_{\ell}: \bar{\Q} \rightarrow \bar{\Q}_{\ell}$, then let $E_{\lambda}$ be the completion of $E$ at $\lambda$ with ring of integers $\mathcal{O}_{\lambda}$. For a fixed $\alpha \in \Z$, we can assume without loss of generality that $\mathcal{O}_{\lambda}$ has the property that
  \begin{equation}\label{Annoying}
   \text{ the polynomial $x^2-\alpha x+1$ factors in $\mathcal{O}_{\lambda}$ with roots $\alpha_{1}$ and $\alpha_{2}$}.
   \end{equation}
   
 We now prove the main technical result which we will need in order to prove Theorem~\ref{thm:main technical}.
 
 \begin{theorem}\label{theoremtechnical}
 Let $\ell \geq 5$ be prime. Suppose that $\lambda$, $\alpha$, $\alpha_{1}$, and $\alpha_{2}$ are defined as above and that $\alpha \not \equiv \pm 2 \pmod{\ell}$. 
 Let $m \geq 1$ be an integer. 
 Suppose that $r$ and $\ell$ are compatible. 
 Then there exists a positive density set $S$ of primes such that if $Q \in S$, then $Q \equiv 1 \pmod{\ell^m}$ and for all of the newforms $f \in S^{\operatorname{new}}_{\ell-r-2}(N_{r},\epsilon_{r})$, we have
  \[
  f \sl T_{Q} \equiv (\alpha_{1}^{\ell^{m-1}}+\alpha_{2}^{\ell^{m-1}})f \pmod{\lambda^m}.
  \]
 \end{theorem}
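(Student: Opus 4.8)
The plan is to apply a Chebotarev-type argument to the product representation attached to all the newforms in $S^{\operatorname{new}}_{\ell-r-2}(N_r,\epsilon_r)$, exactly in the spirit of Ahlgren--Allen--Tang. Let $f_1,\dots,f_s$ be the newforms in $S^{\operatorname{new}}_{\ell-r-2}(N_r,\epsilon_r)$, and for each $i$ let $\rho_{f_i}\colon G_\Q \to \GL_2(\mathcal{O}_\lambda)$ be the $\lambda$-adic representation from Theorem~\ref{bigGalThm} (possibly after enlarging $E$), with mod $\lambda$ reduction $\bar\rho_{f_i}$. Reducing modulo $\lambda^m$ gives $\rho_{f_i,m}\colon G_\Q \to \GL_2(\mathcal{O}_\lambda/\lambda^m)$, and I form the product $\rho_m := \bigoplus_i \rho_{f_i,m}$, which cuts out a finite Galois extension $K_m/\Q$. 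The target condition $f_i\sl T_Q \equiv (\alpha_1^{\ell^{m-1}}+\alpha_2^{\ell^{m-1}})f_i \pmod{\lambda^m}$ is, by part (1) of Theorem~\ref{bigGalThm}, a statement about $\operatorname{tr}\rho_{f_i,m}(\Frob_Q) = a_{f_i}(Q) \bmod \lambda^m$ together with the requirement $Q \equiv 1 \pmod{\ell^m}$ (which controls $Q^{\ell-r-3}$, the determinant factor). So the whole theorem reduces to finding a single conjugacy class (or union of classes) in $\Gal(K_m/\Q)$ with the right traces and the right cyclotomic behaviour, and then invoking the Chebotarev density theorem to get a positive-density set of primes $Q$ whose Frobenius lands there.

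The heart of the matter is to produce a suitable element $\sigma \in \Gal(K_m/\Q)$. Here is where I would use compatibility together with the hypothesis $\alpha \not\equiv \pm 2 \pmod{\ell}$. Since $\alpha_1,\alpha_2$ are the roots of $x^2-\alpha x+1$ and $\alpha \not\equiv \pm 2$, the reduction $\bar\alpha_1 \ne \bar\alpha_2$ in $\bar{\mathbb F}_\ell$ and neither is $\pm 1$; I want $\sigma$ to act on each $\bar\rho_{f_i}$ with eigenvalues $\bar\alpha_1,\bar\alpha_2$ (so $\operatorname{tr}\bar\rho_{f_i}(\sigma)=\bar\alpha$, $\det = 1$), and simultaneously to be trivial on $\mu_{\ell^m}$. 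Because $r$ and $\ell$ are compatible, the image of each $\bar\rho_{f_i}$ contains a conjugate of $\SL_2(\mathbb F_\ell)$; the key structural input is then that a semisimple element of $\GL_2(\mathbb F_\ell)$ with eigenvalues $\bar\alpha_1,\bar\alpha_2$ and determinant $1$ lies in $\SL_2(\mathbb F_\ell)$, so such an element exists inside each image. I then need to glue these choices across the different $i$ and across the cyclotomic part — this is a statement that the various $\bar\rho_{f_i}$ have "independent" images after we fix determinants, which is exactly the point of the compatibility hypothesis (ruling out the dihedral/exceptional/reducible cases forces the images to be essentially $\SL_2(\mathbb F_\ell)$, hence large enough that a Goursat-type argument lets us prescribe traces on each factor independently). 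Finally one lifts from mod $\lambda$ to mod $\lambda^m$ using that $\SL_2(\mathbb Z_\ell) \to \SL_2(\mathbb Z/\ell^m)$ is surjective and that the image of $\rho_{f_i}$ (restricted to the relevant subgroup) surjects onto $\SL_2(\mathcal{O}_\lambda/\lambda^m)$; the ramification at $\ell$ is controlled by part (3) of Theorem~\ref{bigGalThm}, and the cyclotomic condition $Q\equiv 1\pmod{\ell^m}$ is imposed by also requiring $\sigma$ to fix $\Q(\mu_{\ell^m})$.

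The two outputs $(\alpha_1^{\ell^{m-1}}+\alpha_2^{\ell^{m-1}})$ and $\alpha$ agree modulo $\lambda$ but not modulo $\lambda^m$, so a little care is needed: one actually wants $\Frob_Q$ to be \emph{conjugate} in each $\rho_{f_i,m}$ to the fixed element $\operatorname{diag}(\alpha_1,\alpha_2)$ (whose $\ell^{m-1}$-th power trace is the desired quantity after noting $\alpha_1\alpha_2=1$), not merely to have trace $\alpha$; since the target is a semisimple element with distinct eigenvalues modulo $\lambda$, its conjugacy class is determined by its characteristic polynomial, and one checks $a_{f_i}(Q) \equiv \alpha_1 + \alpha_2 \pmod{\lambda^m}$ is the condition one can actually impose via Chebotarev. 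Then $f_i \sl T_Q$ acts on $f_i$ by $a_{f_i}(Q)$, and one rewrites $\alpha_1+\alpha_2$; but the theorem states $\alpha_1^{\ell^{m-1}}+\alpha_2^{\ell^{m-1}}$, so in fact one should impose the conjugacy class of $\operatorname{diag}(\alpha_1^{\ell^{m-1}},\alpha_2^{\ell^{m-1}})$ — still an element of $\SL_2$ with eigenvalues that are non-trivial and distinct mod $\lambda$ (as $\bar\alpha_1^{\ell^{m-1}} = \bar\alpha_1$, etc.), so the same argument applies verbatim. \textbf{The main obstacle} I anticipate is the Goursat/independence step: showing that the product $\rho_m$ has image containing $\prod_i \SL_2(\mathcal{O}_\lambda/\lambda^m) \times (\text{cyclotomic part controlled separately})$, i.e. that distinct newforms give genuinely independent mod $\lambda^m$ data. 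This is where one must rule out congruences $\bar\rho_{f_i} \cong \bar\rho_{f_j}$ (and twists thereof) between distinct newforms, or absorb such congruences — and indeed the excerpt's remark about "no congruence modulo any prime above $\ell$ between distinct newforms in $S(-r\ell,\frac{\ell-r-1}{2})$" signals that this is precisely the delicate point, handled for $\alpha \not\equiv -2$ by an argument specific to that excluded trace value and otherwise requiring the extra hypothesis.
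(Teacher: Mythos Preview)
Your Chebotarev setup and endgame are right, and you even correctly identify the target trace $\alpha_1^{\ell^{m-1}}+\alpha_2^{\ell^{m-1}}$. But the mechanism you propose for producing the element $\sigma$ is not the paper's, and your version has a genuine gap. You want to show that the product $\bigoplus_i \rho_{f_i,m}$ has image containing $\prod_i \SL_2(\mathcal{O}_\lambda/\lambda^m)$ and then select the desired element directly. That Goursat step at level $\lambda^m$ can fail whenever two newforms are congruent modulo $\lambda$, and you offer no argument for it; compatibility controls only the individual mod-$\ell$ images, not their mutual independence, and says nothing about images modulo $\lambda^m$. You also misread the role of $\alpha \not\equiv \pm 2$: it has nothing to do with independence of the $\rho_{f_i}$, and the paper's remark about congruences between newforms concerns the downstream quantity $m(f_{r,\ell})$ in the application (Theorem~\ref{thm:main technical}), not the proof of this theorem.

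The paper bypasses level-$m$ independence entirely. One works only modulo $\lambda$: by \cite[Prop.~3.8]{AAT} there is a single $\sigma \in \Gal(\bar{\Q}/\Q(\zeta_\ell))$ with every $\bar{\rho}_{f_i}(\sigma)$ conjugate to $\pmatrix{\bar{\alpha}}{1}{-1}{0}$; since one asks all factors to hit the \emph{same} conjugacy class, congruences between newforms are harmless at this stage. The hypothesis $\alpha \not\equiv \pm 2 \pmod{\ell}$ now enters, for Hensel's lemma: it gives $\alpha_1 \not\equiv \alpha_2 \pmod{\lambda}$, so each $\rho_{f_i}(\sigma)$ is diagonalizable over $\mathcal{O}_\lambda$ with eigenvalues $\beta_{1,i},\beta_{2,i}$ satisfying $\beta_{j,i} \equiv \alpha_j \pmod{\lambda}$. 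Then one simply replaces $\sigma$ by $\sigma^{\ell^{m-1}}$. This single move does two things at once: since $[\Q(\zeta_{\ell^m}):\Q(\zeta_\ell)]=\ell^{m-1}$ it forces $\sigma^{\ell^{m-1}} \in \Gal(\bar{\Q}/\Q(\zeta_{\ell^m}))$, and via the elementary fact $\beta \equiv \alpha \pmod{\lambda} \Rightarrow \beta^{\ell^{m-1}} \equiv \alpha^{\ell^{m-1}} \pmod{\lambda^m}$ it promotes the eigenvalue congruence to level $\lambda^m$, yielding $\operatorname{tr}\rho_{f_i}(\sigma^{\ell^{m-1}}) \equiv \alpha_1^{\ell^{m-1}}+\alpha_2^{\ell^{m-1}} \pmod{\lambda^m}$ for every $i$ simultaneously. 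Chebotarev then finishes as you say. No Goursat at level $m$, no big-image theorems beyond the mod-$\ell$ input from compatibility.
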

 \begin{proof}
 By the assumption that $\alpha \not \equiv \pm 2 \pmod{\ell}$, we know that $\alpha_{1} \not \equiv \alpha_{2} \pmod{\lambda}$. 
 For each newform $f \in S^{\operatorname{new}}_{\ell-r-2}(N_{r},\epsilon_{r})$, the Galois representations $\rho_{f}$ and $\bar{\rho}_{f}$ can be defined over $\mathcal{O}_{\lambda}$ and $\mathcal{O}_{\lambda}/\lambda$, respectively. Define $\bar{\alpha}:= \alpha \pmod{\ell}$.
Since $r$ and $\ell$ are compatible, we know that the image of each $\bar{\rho}_{f}$ contains a conjugate of $\SL_2(\mathbb{F}_{\ell})$, 
so we can use \cite[~Proposition $3.8$]{AAT} to find an element $\sigma \in \Gal(\bar{\Q}/\Q(\zeta_{\ell}))$ such that each $\bar{\rho}_{f}(\sigma)$ is conjugate to $\left(\begin{matrix}\bar{\alpha} & 1 \\-1 & 0\end{matrix}\right)$. 
It follows that the characteristic polynomial of $\rho_{f}(\sigma)$ is congruent to $x^2-\alpha x+1$ modulo $\lambda$. 
 Since
 $\alpha_1 \not \equiv \alpha_{2} \pmod{\lambda}$, 
 we can use Hensel's lemma to factor the characteristic polynomial of $\rho_{f}(\sigma)$ over $\mathcal{O}_{\lambda}$.
 Thus, $\rho_{f}(\sigma)$ is conjugate to a diagonal matrix with entries $\beta_{1}$ and $\beta_{2}$ in $\mathcal{O}_{\lambda}$ with the properties that $\beta_{1} \equiv \alpha_{1} \pmod{\lambda}$ and $\beta_{2} \equiv \alpha_{2} \pmod{\lambda}$. 
By induction, we see that $\beta^{\ell^{m-1}}_{1} \equiv \alpha^{\ell^{m-1}}_{1} \pmod{\lambda^m}$ and $\beta^{\ell^{m-1}}_{2} \equiv \alpha^{\ell^{m-1}}_{2} \pmod{\lambda^m}$. 
 This implies that the characteristic polynomial of $\rho_{f}(\sigma^{\ell^{m-1}})$ is congruent to $x^{2}-(\alpha^{\ell^{m-1}}_{1}+\alpha^{\ell^{m-1}}_{2})x+1 \pmod{\lambda^m}.$

 Since $\Gal(\Q(\zeta_{\ell^{m}})/\Q(\zeta_{\ell}))$ has order $\ell^{m-1}$, we have $\sigma^{\ell^{m-1}} \in \Gal(\bar{\Q}/\Q(\zeta_{\ell^{m}}))$.
 By composing $\rho_{f} |_{\Gal(\bar{\Q}/\Q(\zeta_{\ell^{m}}))}$ with the projection $\tau_{m}:\GL_2(\mathcal{O}_{\lambda}) \rightarrow \GL_2(\mathcal{O}_{\lambda}/\lambda^{m})$, we have a representation $\bar{\rho}_{f,m}:\Gal(\bar{\Q}/\Q(\zeta_{\ell^{m}})) \rightarrow \GL_2(\mathcal{O}_{\lambda}/\lambda^{m})$. 
 This representation has finite image, so there exists a finite extension $L/\Q(\zeta_{\ell^{m}})$ such that if $\tau:\Gal(\bar{\Q}/\Q(\zeta_{\ell^m})) \rightarrow \Gal(L/\Q(\zeta_{\ell^{m}}))$ is the restriction map, then there exists a representation $\widetilde{\rho}_{f,m}:\Gal(L/\Q(\zeta_{\ell^{m}})) \rightarrow \GL_2(\mathcal{O}_{\lambda}/\lambda^{m})$ such that $\widetilde{\rho}_{f,m} \circ \tau= \bar{\rho}_{f,m}$. 
 The Chebotarev density theorem implies that there is a positive density set $S$ of primes such that if $Q \in S$, then $\widetilde{\rho}_{f,m}(\text{Frob}_{Q} |_{L})$ is conjugate to $\widetilde{\rho}_{f,m}(\sigma^{\ell^{m-1}} \sl_{L})$; the fact that $\text{Frob}_{Q} |_{L} \subset \Gal(L/\Q(\zeta_{\ell^{m}}))$ for such $Q$ implies that $Q \equiv 1 \pmod{\ell^{m}}$.
 For such $Q$ and any newform $f \in S^{\text{new}}_{\ell-r-2}(N_{r},\epsilon_{r})$, we have
 \[
 f \sl T_{Q} \equiv (\text{tr }\widetilde{\rho}_{f,m}(\text{Frob}_{Q} |_{L}))f \equiv (\alpha^{\ell^{m-1}}_{1}+\alpha_{2}^{\ell^{m-1}})f \pmod{\lambda^{m}}.
 \]
 \end{proof}
 \begin{remark}
 We assume that $\alpha \not \equiv \pm 2 \pmod{\ell}$ so that we can apply Hensel's lemma. However, we do not need Hensel's lemma when $m=1$. Thus, 
 if $m=1$, then we have the result when $\alpha \equiv  \pm 2\pmod{\ell}$.
 \end{remark}
 \section{Proofs of Theorems $1.2$ and $1.3$}
 We now prove Theorems $1.2-1.3$. First, we introduce some notation.
 Let $\ell \geq 5$ be prime. Suppose that $f \in S_{k}(1,\nu^{r}_{\eta},\Z)$  with $f \not \equiv 0 \pmod{\ell}$, where $r$ is an odd positive integer and $k \in \frac{1}{2}\Z\backslash \Z$ satisfies $k \geq \frac{5}{2}$.
 We define
 \[
 S(r,k):= \begin{cases} 
S^{\operatorname{new}}_{2k-1}\(2,\(\frac{8}{r}\)\) & \text{if } 3 \mid r, \\
S^{\operatorname{new}}_{2k-1}\(6,-\(\frac{8}{r}\),-\(\frac{12}{r}\)\) & \text{if }  3\nmid r
\end{cases}
\] 
and (as in the introduction)
\[
 \chi^{(r)}:= \begin{cases} 
\(\frac{-4}{\bullet}\) & \text{if } 3 \mid r, \\
\(\frac{12}{\bullet}\)& \text{if }  3\nmid r.
\end{cases}
\] 
 For each squarefree $t$, let
$F_{t} \in S(r,k)$ 
be the form with $\text{Sh}_{t}f=F_{t} \otimes \chi^{(r)}$.
As $t$ ranges over all of the squarefree integers, there are only finitely many non-zero possibilities for $F_{t} \pmod{\ell}$. Let $S:=\{F_{t_{1}},...,F_{t_{k}}\}$ be a collection which represents all of these possibilities. By \eqref{f0iffShimura0}, we see that $S$ is not empty.

The space $S(r,k)$  is spanned by newforms $g_{1},...,g_{d}$.
 For $j \in \{1,...,k\}$, write
 \[
 F_{t_{j}}=\sum^{d}_{i=1}c_{i,j}g_i,
 \]
 and let $E$ be a number field which contains the coefficients of each $g_i$ as well as all of the coefficients $c_{i,j}$. Fix a prime $\lambda$ of $E$ over $\ell$ and define
 \[
 m(f):= \text{max}(1,1-\text{min}(\ord_{\lambda}(c_{i,j})))
 \]
 (the definition depends on the choice of $\lambda$, but this is not important to us).
 Before we prove Theorem~\ref{thm:main technical}, we require the following lemma.
 \begin{lemma}\label{AAT6-1}
 Suppose that $\ell \geq 5$ is prime and that $f \in S_{k}(1,\nu^{r}_{\eta},\Z)$, where $r$ is an odd positive integer.
Suppose that the newforms $g_i$ and the integer $m(f)$ are defined as above. 
  Suppose that $Q \geq 5$ is a prime and that $\lambda_{Q}$ is an algebraic integer in $E$ with the property that
  $g_{i} \sl T_{Q} \equiv \lambda_{Q}g_{i} \pmod{\lambda^{m(f)}}$ for all $i$. Then 
 \[
 f \sl T_{Q^{2}} \equiv \chi^{(r)}(Q)\lambda_{Q}f \pmod{\lambda}.
 \]
 \end{lemma}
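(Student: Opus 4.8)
The plan is to transfer the Hecke eigenvalue congruence on the integral-weight newforms $g_i$ back to the half-integral-weight form $f$ via the Shimura correspondence, using the compatibility of the Shimura lift with Hecke operators together with the criterion \eqref{f0iffShimura0}. First I would record that, by the definition of $m(f)$, each coefficient $c_{i,j}$ satisfies $\ord_\lambda(c_{i,j}) \geq 1-m(f)$, so that $c_{i,j}$ times anything in $\lambda^{m(f)}$ lands in $\lambda$. Concretely, since $g_i \sl T_Q \equiv \lambda_Q g_i \pmod{\lambda^{m(f)}}$ for every $i$, multiplying by $c_{i,j}$ and summing over $i$ gives, for each representative $F_{t_j} = \sum_i c_{i,j} g_i$,
\[
F_{t_j} \sl T_Q \equiv \lambda_Q F_{t_j} \pmod{\lambda}.
\]
(Here I am using that a congruence modulo $\lambda^{m(f)}$ multiplied by an element of $\lambda$-valuation $\geq 1-m(f)$ becomes a congruence modulo $\lambda$.)

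Next I would invoke the intertwining property $\Sh_t(f \sl T_{Q^2}) = (\Sh_t f)\sl T_Q$ recorded in Section~2, together with the fact that twisting by the fixed character $\chi^{(r)}$ commutes with the Hecke operator $T_Q$ at $Q \geq 5$ coprime to the relevant level (so that $\Sh_t(f\sl T_{Q^2}) = (F_t \sl T_Q)\otimes \chi^{(r)}$, up to the twisting character evaluated at $Q$). Combining this with the displayed congruence for the representatives $F_{t_j}$, I get
\[
\Sh_t\big(f \sl T_{Q^2} - \chi^{(r)}(Q)\lambda_Q f\big) \equiv 0 \pmod{\lambda}
\]
for every squarefree $t$: indeed $\Sh_t f = F_t \otimes \chi^{(r)}$ for some $F_t$ which modulo $\ell$ equals one of the finitely many $F_{t_j}$, and for that representative the congruence $F_{t_j}\sl T_Q \equiv \lambda_Q F_{t_j}$ holds; the twist by $\chi^{(r)}$ contributes exactly the factor $\chi^{(r)}(Q)$. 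One must be slightly careful that the character twist and the reduction mod $\ell$ are compatible — $\chi^{(r)}$ takes values in $\{\pm 1\} \subseteq \Z$, so this is harmless. Then by \eqref{f0iffShimura0}, vanishing of all the Shimura lifts modulo $\lambda$ (equivalently modulo $\ell$, since $f$ and all forms in sight have rational integer coefficients so the relevant reductions only see $\ell$) forces $f \sl T_{Q^2} - \chi^{(r)}(Q)\lambda_Q f \equiv 0 \pmod{\lambda}$, which is the assertion.

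The main obstacle I anticipate is bookkeeping around the twist by $\chi^{(r)}$ and the precise normalization of the Shimura lift: I need $(\Sh_t f)\sl T_Q = \Sh_t(f\sl T_{Q^2})$ to interact correctly with writing $\Sh_t f = F_t \otimes \chi^{(r)}$, so that the Hecke eigenvalue of $F_t$ under $T_Q$ gets multiplied by $\chi^{(r)}(Q)$ when passed through the twist — this is the standard relation $(g\otimes\psi)\sl T_Q = \psi(Q)(g\sl T_Q)\otimes \psi$ for $Q$ coprime to the conductor of $\psi$, valid here since $Q\geq 5$. A secondary point is the valuation argument: one must make sure that $m(f)$ as defined genuinely absorbs the denominators of all the $c_{i,j}$, which is exactly what $m(f) = \max(1, 1-\min_{i,j}\ord_\lambda(c_{i,j}))$ is designed to do. Everything else is a direct application of results already established in Section~2.
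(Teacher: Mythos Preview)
Your proposal is correct and follows essentially the same route as the paper: pass the Hecke congruence on the $g_i$ through the linear combination $F_{t_j}=\sum_i c_{i,j}g_i$ using the valuation bound encoded in $m(f)$, twist by $\chi^{(r)}$ via $(g\otimes\chi^{(r)})\sl T_Q=\chi^{(r)}(Q)(g\sl T_Q)\otimes\chi^{(r)}$, use $\Sh_t(f\sl T_{Q^2})=(\Sh_t f)\sl T_Q$, and conclude with \eqref{f0iffShimura0}. The only cosmetic difference is that the paper twists first and then sums, whereas you sum first to get $F_{t_j}\sl T_Q\equiv\lambda_Q F_{t_j}\pmod{\lambda}$ and then twist; these are equivalent.
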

 
 \begin{proof}
 For each $t_{j}$ we have 
 \[
 F_{t_{j}} \otimes \chi^{(r)}= \sum^{d}_{i=1} c_{i,j}g_{i} \otimes \chi^{(r)},
 \]
 and for each $i$ we have
 \[
 \(g_{i} \otimes \chi^{(r)}\) \sl T_{Q}=\chi^{(r)}(Q)\(g_{i}\sl T_{Q}\) \otimes \chi^{(r)} \equiv \chi^{(r)}(Q)\lambda_{Q}g_{i} \otimes \chi^{(r)} \pmod{\lambda^{m(f)}}.
 \]
 By the definition of $m(f)$, it follows for each $t_{j}$ that
 \[
 \(F_{t_{j}} \otimes \chi^{(r)}\) \sl T_{Q} \equiv \chi^{(r)}(Q) \lambda_{Q}F_{t_{j}} \otimes \chi^{(r)} \pmod{\lambda}.
 \]
 Thus, for each squarefree $t$, we have
 \[
\Sh_{t}\(f\sl T_{Q^{2}}\)=\(\Sh_{t}f\)\sl T_{Q}  \equiv \chi^{(r)}(Q)\lambda_{Q}\Sh_{t}f \pmod{\lambda}.
 \]
 The result follows from \eqref{f0iffShimura0}.
 \end{proof}
The next result explains how to produce congruences from Lemma~\ref{AAT6-1}.
 \begin{lemma}\label{AAT6-2}
 Suppose that $\ell \geq 5$ is prime, that $r$ is an odd positive integer, and that $f=\sum a(n)q^{\frac{n}{24}} \in S_{k}(1,\nu^{r}_{\eta},\Z)$.
 Suppose that $Q \geq 5$ is prime.
 Suppose that there exists $\alpha_{Q} \in \{\pm 1\}$ and $\alpha \in \Z$ with
 \[
 f \sl T_{Q^{2}} \equiv \alpha_{Q}\alpha Q^{k-\frac{3}{2}}f \pmod{\ell}.
 \]
 Then we have
 \[
 a(Q^{2}n) \equiv (\alpha-1)\alpha_{Q}Q^{k-\frac{3}{2}}a(n) \pmod{\ell} \ \ \ \text{ if } \ \ \(\frac{n}{Q}\)=
 \alpha_{Q}\(\frac{12}{Q}\)\(\frac{-1}{Q}\)^{k-\frac{1}{2}}.
 \]
 \end{lemma}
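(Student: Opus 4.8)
The plan is to unwind the action of $T_{Q^2}$ on the Fourier expansion of $f$ and compare coefficients. Write $f = \sum a(n) q^{n/24} \in S_k(1,\nu^r_\eta,\Z)$ with $k \in \frac12\Z\setminus\Z$, $k \geq \frac52$. By formula \eqref{Hecke1}, the coefficient of $q^{n/24}$ in $f\sl T_{Q^2}$ is
\[
a(Q^2 n) + Q^{k-\frac32}\(\frac{-1}{Q}\)^{k-\frac12}\(\frac{12n}{Q}\)a(n) + Q^{2k-2}a\(\frac{n}{Q^2}\).
\]
I would first restrict attention to indices $n$ coprime to $Q$, for which the last term vanishes; this is exactly the setting in which the quadratic-residue hypothesis on $\(\frac{n}{Q}\)$ makes sense and is the case needed for the statement.

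Next I would invoke the hypothesis $f\sl T_{Q^2} \equiv \alpha_Q \alpha Q^{k-\frac32} f \pmod{\ell}$, which gives, for every $n$ with $Q \nmid n$,
\[
a(Q^2 n) + Q^{k-\frac32}\(\frac{-1}{Q}\)^{k-\frac12}\(\frac{12n}{Q}\)a(n) \equiv \alpha_Q \alpha Q^{k-\frac32} a(n) \pmod{\ell}.
\]
Now I would specialize to those $n$ satisfying $\(\frac{n}{Q}\) = \alpha_Q \(\frac{12}{Q}\)\(\frac{-1}{Q}\)^{k-\frac12}$. Since $\alpha_Q \in \{\pm1\}$ and the Legendre symbol values are $\pm1$, multiplying through by $\(\frac{12}{Q}\)\(\frac{-1}{Q}\)^{k-\frac12}$ (each a unit mod $\ell$ that squares to $1$) converts the coefficient $\(\frac{-1}{Q}\)^{k-\frac12}\(\frac{12n}{Q}\) = \(\frac{12}{Q}\)\(\frac{-1}{Q}\)^{k-\frac12}\(\frac{n}{Q}\)$ into exactly $\alpha_Q$. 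Substituting, the left-hand side becomes $a(Q^2 n) + \alpha_Q Q^{k-\frac32} a(n)$, and rearranging yields
\[
a(Q^2 n) \equiv \(\alpha_Q \alpha - \alpha_Q\) Q^{k-\frac32} a(n) = (\alpha-1)\alpha_Q Q^{k-\frac32} a(n) \pmod{\ell},
\]
which is the claimed congruence.

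The argument is essentially a bookkeeping exercise once \eqref{Hecke1} is in hand, so there is no serious obstacle; the only point requiring care is the sign manipulation with the Legendre symbols — in particular verifying that $\(\frac{-1}{Q}\)^{k-\frac12}$ is a well-defined element of $\{\pm1\}$ (it is, since $k-\frac12 \in \Z$) and that the chosen normalization of $\(\frac{n}{Q}\)$ is precisely the one that collapses the cross term to $\pm Q^{k-\frac32}a(n)$. One should also note that the hypothesis only needs to be used for $n$ coprime to $Q$, so no assumption on the behavior at multiples of $Q$ is required for the stated conclusion.
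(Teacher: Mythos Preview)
Your proof is correct and follows essentially the same approach as the paper's: both simply unwind the Hecke action from \eqref{Hecke1}, note that the third term vanishes when $Q\nmid n$, and check that under the specified Legendre-symbol condition the middle term collapses to $\alpha_Q Q^{k-3/2}a(n)$, after which the conclusion is immediate by comparing coefficients. The paper's version is just a one-sentence sketch of exactly the computation you wrote out in full.
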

 \begin{proof}
 This follows from the definition of the Hecke operator in \eqref{Hecke1}. If $f \sl T_{Q^{2}}=\sum b(n)q^{\frac{n}{24}}$, then the third term of $b(n)$ does not contribute and if $\(\frac{n}{Q}\)=
 \alpha_{Q}\(\frac{12}{Q}\)\(\frac{-1}{Q}\)^{k-\frac{1}{2}}$, then the middle term of $b(n)$ becomes $\alpha_{Q}Q^{k-\frac{3}{2}}a(n)$.
 \end{proof}
 We now prove Theorem~\ref{thm:main technical}.
 \begin{proof}[Proof of Theorem~\ref{thm:main technical}]
 Suppose that $\ell \geq 5$ is prime, that $r < \ell-4$ is an odd positive integer and that $r$ and $\ell$ are compatible. 
By Proposition~\ref{FILTRATION}, 
there is a modular form $f_{r,\ell} \in S_{\frac{\ell-r-1}{2}}(1,\nu^{-r\ell}_{\eta})$ such that
 \[
 f_{r,\ell} \equiv \sum p_{r}\(\frac{\ell n+r}{24}\)q^{\frac{n}{24}} \pmod{\ell}.
 \]
 Since $\chi^{(-r\ell)}=\chi^{(r)}$, each of its Shimura lifts lands in the space $S\(-r\ell,\frac{\ell-r-1}{2}\) \otimes \chi^{(r)}$.

For each squarefree $t$, let $\tilde{F}_{t} \in S(-r\ell,\frac{\ell-r-1}{2})$ be the form with $\text{Sh}_{t}f_{r,\ell}=\tilde{F}_{t} \otimes \chi^{(r)}$.
 By \cite[Exercise $6.4$]{Serre2}, we see that there exists a positive density set $S$ of primes such that if $Q \in S$, then  $Q \equiv 1 \pmod{\ell}$ and
 \[
\tilde{F}_{t}
\sl T_{Q} \equiv 2
\tilde{F}_{t}
\pmod{\ell}.
\]
for all squarefree $t$.
Thus, for all squarefree $t$, we have
 \[
 \(\tilde{F}_{t} \otimes \chi^{(r)}\) \sl T_{Q} \equiv 2\chi^{(r)}(Q)\tilde{F}_{t} \otimes \chi^{(r)} \pmod{\ell},
 \]
 which means that 
  \[
\Sh_{t}\(f\sl T_{Q^{2}}\)=\(\Sh_{t}f\)\sl T_{Q}  \equiv 2\chi^{(r)}(Q)\Sh_{t}f \pmod{\ell}.
 \]
 It follows from \eqref{f0iffShimura0} that
\[
f_{r,\ell}\sl T_{Q^{2}} \equiv 2\chi^{(r)}(Q)f_{r,\ell} \pmod{\ell}.
\]
 Thus, for $\alpha \equiv 2 \pmod{\ell}$, the result follows from Lemma~\ref{AAT6-2}.

Recall the definitions of $\alpha_{1}$ and $\alpha_{2}$ in \eqref{Annoying}.
If $\alpha \not \equiv \pm 2 \pmod{\ell}$, then Theorem~\ref{theoremtechnical}  implies that there is a positive density set $S$ of primes such that if $Q \in S$, then $Q \equiv 1 \pmod{\ell}$ and for all of the newforms $g_i$ in $S(-r\ell,\frac{\ell-r-1}{2})$, we have
 \begin{equation}\label{Ionlycitethisonce}
 g_{i} \sl T_{Q} \equiv (\alpha^{\ell^{m(f_{r,\ell})-1}}_1+\alpha_{2}^{\ell^{m(f_{r,\ell})-1}})g_{i} \pmod{\lambda^{m(f_{r,\ell})}}.
 \end{equation}
 Since
 \[
 (\alpha^{\ell^{m(f_{r,\ell})-1}}_1+\alpha_{2}^{\ell^{m(f_{r,\ell})-1}}) \equiv (\alpha_{1}+\alpha_{2})^{\ell^{m(f_{r,\ell})-1}} \equiv \alpha^{\ell^{m(f_{r,\ell})-1}} \equiv \alpha \pmod{\lambda},
 \]
it follows from Lemma~\ref{AAT6-1} that  $f_{r,\ell} \sl T_{Q^{2}} 
 \equiv \alpha\chi^{(r)}(Q) f_{r,\ell} \pmod{\lambda}$. Therefore,
 \[
 f_{r,\ell} \sl T_{Q^{2}} 
 \equiv \alpha\chi^{(r)}(Q) f_{r,\ell} \pmod{\ell},
 \]
and the result follows from Lemma~\ref{AAT6-2}.
 \end{proof}
 \begin{remark}
 If there are no congruences between distinct newforms in $S(-r\ell,\frac{\ell-r-1}{2})$, then $m(f_{r,\ell})=1$ by \cite[Lemma $5.3$]{AAT}. By the remark following the proof of Theorem~\ref{theoremtechnical}, this implies that \eqref{Ionlycitethisonce} holds when $\alpha \equiv -2 \pmod{\ell}$, which gives Theorem~\ref{thm:main technical} for this case.
 \end{remark}
 To prove Theorem~\ref{thm:arbitraryweight1},
 we make use of the following consequence of \cite[Thm. $4.2$]{AAT}.
 They only state the result for $f \in S^{\operatorname{new}}_{k}(6)$, but the same argument applies with $S^{\operatorname{new}}_{k}(6)$ replaced by $S^{\operatorname{new}}_{k}(2)$.
\begin{theorem}\label{Theorem4.2}
 Suppose that $\ell \geq 5$ is prime, that $r$ is an odd positive integer, and that there exists an integer $a$ for which $2^{a} \equiv -1 \pmod{\ell}$. 
Let $k \in \frac{1}{2}\Z \backslash \Z$. 
Then there exists a positive density set $S$ of primes such that if $Q \in S$, then $Q \equiv -2 \pmod{\ell}$ and for every $f \in S(r,k)$ 
we have $f \sl T_{Q} \equiv -(-\epsilon_{2})^{a}Q^{k-\frac{3}{2}}f \pmod{\ell}$, where $\epsilon_{2}$ is the $W_{2}$ eigenvalue of $f$.
\end{theorem}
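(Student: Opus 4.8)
The plan is to deduce Theorem~\ref{Theorem4.2} from \cite[Thm.~4.2]{AAT}, which is the corresponding statement for the newforms in the full new space $S^{\operatorname{new}}_{k'}(6)$ of integral weight $k'$, together with a routine check that the same argument works at level $2$. Two preliminary reductions make this clean. First, every form in $S(r,k)$ has integral weight $2k-1$, which is even since $k\in\frac12\Z\setminus\Z$, and $k-\tfrac32=\tfrac{2k-1}{2}-1$; so the exponent $Q^{k-3/2}$ in the conclusion is the $Q^{k'/2-1}$ of \cite{AAT} with $k'=2k-1$. Second, $S(r,k)$ is an Atkin--Lehner eigenspace inside a newform space, hence is spanned by newforms and carries a single $W_2$-eigenvalue $\epsilon_2$; it therefore suffices to prove the congruence for the finitely many newforms in $S(r,k)$, all with the common eigenvalue $-(-\epsilon_2)^aQ^{k-3/2}$, and linearity then gives it for every $f\in S(r,k)$.

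If $3\nmid r$ then $S(r,k)\subseteq S^{\operatorname{new}}_{2k-1}(6)$, and the desired congruence for the newforms of $S(r,k)$ is precisely \cite[Thm.~4.2]{AAT} in weight $2k-1$. If $3\mid r$ then $S(r,k)\subseteq S^{\operatorname{new}}_{2k-1}(2)$, and the plan is to reprove \cite[Thm.~4.2]{AAT} with $6$ replaced by $2$; the argument of \cite{AAT} is Galois-theoretic and uses only the local structure at $2$ together with the Chebotarev density theorem, so it should transfer verbatim.

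In more detail: for a newform $f$ of squarefree level $N$ with $2\|N$ and trivial nebentypus, and a prime $\lambda$ of its coefficient field above $\ell$, part $(2)$ of Theorem~\ref{bigGalThm} at $p=2$ describes $\bar\rho_f\sl_{G_2}$ as the upper-triangular matrix $\pmatrix{\chi\psi}{*}{0}{\psi}$ with $\psi$ unramified, $\psi(\Frob_2)=\iota_\ell(a_2)$, and, because $2\|N$ and the nebentypus is trivial, $a_2=\pm 2^{k-3/2}$ with the sign governed by $\epsilon_2$; in particular the trace of $\bar\rho_f$ on any power of a Frobenius at $2$ is determined by $\epsilon_2$ and powers of $2$. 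Using the hypothesis $2^a\equiv-1\pmod\ell$, one selects an element $\sigma\in G_{\Q}$, built from a power of a Frobenius at $2$, with $\omega(\sigma)=-2$ and with $\operatorname{tr}\bar\rho_g(\sigma)=-(-\epsilon_2)^a(-2)^{k-3/2}$ for all the newforms $g\in S(r,k)$ at once; applying the Chebotarev density theorem over the compositum of $\Q(\zeta_\ell)$ with the fixed fields of the $\bar\rho_g$ then produces a positive density set of primes $Q$ with $\Frob_Q$ conjugate to $\sigma$. For such $Q$ we get $Q\equiv-2\pmod\ell$ and $g\sl T_Q\equiv\operatorname{tr}\bar\rho_g(\Frob_Q)\,g\equiv-(-\epsilon_2)^aQ^{k-3/2}\,g\pmod\ell$ for every newform $g\in S(r,k)$. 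Note that this construction uses no largeness of the images $\bar\rho_g$, which is why no compatibility hypothesis appears.

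The only point at which the level genuinely enters is the verification that every ingredient above is insensitive to whether $3\mid N$: part $(2)$ of Theorem~\ref{bigGalThm} at $p=2$ holds verbatim for $f$ that is $2$-new of level $2$; the value of $a_2$ and its $\epsilon_2$-dependence use only $2\|N$ and triviality of the nebentypus; the Chebotarev input uses only $\ell\nmid N$, which holds for $N=2$; and any appeal in the level-$6$ case to the involution $W_3$ or to the local behaviour at $3$ may be deleted, since the final eigenvalue $-(-\epsilon_2)^aQ^{k-3/2}$ depends only on $\epsilon_2$. I expect the main difficulty, though an essentially routine one, to be the sign-and-exponent bookkeeping in the construction of $\sigma$: confirming that the eigenvalue comes out as exactly $-(-\epsilon_2)^aQ^{k-3/2}$ at level $2$, with no residual $W_3$-type correction, which really requires working through \cite[\S4]{AAT} rather than merely quoting it.
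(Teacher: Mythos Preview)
Your proposal is correct and matches the paper's approach exactly: the paper does not give an independent proof of this theorem but simply states it as a consequence of \cite[Thm.~4.2]{AAT}, noting that ``they only state the result for $f\in S^{\operatorname{new}}_k(6)$, but the same argument applies with $S^{\operatorname{new}}_k(6)$ replaced by $S^{\operatorname{new}}_k(2)$.'' Your sketch of why the level-$2$ case goes through---that the argument uses only the local description at $2$ from part~(2) of Theorem~\ref{bigGalThm} together with Chebotarev, and that any $W_3$-dependence is absent from the final eigenvalue---is exactly the content of that parenthetical remark, made explicit.
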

 \begin{proof}[Proof of Theorem~\ref{thm:arbitraryweight1}]
 Suppose that $r$ is an odd positive integer. Suppose that $\ell \geq 5$ is prime, that $r < \ell-4$ and that there exists an integer $a$ with $2^{a} \equiv -1 \pmod{\ell}$. For each squarefree $t$, let $\tilde{F}_{t} \in S(-r\ell,\frac{\ell-r-1}{2})$ be the form with $\text{Sh}_{t}f_{r,\ell}=\tilde{F}_{t} \otimes \chi^{(r)}$.
 By Theorem~\ref{Theorem4.2}, we see that there exists $\gamma \in \{\pm 1\}$ and a positive density set $S$ of primes such that if $Q \in S$, then  $Q \equiv -2 \pmod{\ell}$ and
 \[
\tilde{F}_{t}
\sl T_{Q} \equiv \gamma Q^{\frac{\ell-r-4}{2}}
\tilde{F}_{t}
\pmod{\ell}.
\]
for all $t$.
Thus, for all $t$, we have
 \[
 \(\tilde{F}_{t} \otimes \chi^{(r)}\) \sl T_{Q} \equiv \gamma \chi^{(r)}(Q) Q^{\frac{\ell-r-4}{2}}\tilde{F}_{t} \otimes \chi^{(r)} \pmod{\ell},
 \]
 which means that 
  \[
\Sh_{t}\(f_{r,\ell}\sl T_{Q^{2}}\)=\(\Sh_{t}f_{r,\ell}\)\sl T_{Q}  \equiv \gamma \chi^{(r)}(Q) Q^{\frac{\ell-r-4}{2}} \Sh_{t}f_{r,\ell} \pmod{\ell}.
 \]
 It follows from \eqref{f0iffShimura0} that
\[
f_{r,\ell}\sl T_{Q^{2}} \equiv \gamma \chi^{(r)}(Q)Q^{\frac{\ell-r-4}{2}}f_{r,\ell} \pmod{\ell}.
\]
It then follows (after setting $\alpha_{Q}=\gamma \chi^{(r)}(Q)$ and $\beta=1$) from Lemma~\ref{AAT6-2} 
that there exists $\epsilon_{Q} \in \{\pm 1\}$ with the property that
\[
p_{r}\( \frac{\ell Q^{2}n+r}{24}\) \equiv 0 \pmod{\ell} \ \ \ \text{ if } \ \ \(\frac{n}{Q}\)= \epsilon_{Q}.
\]
This proves part $(1)$ of Theorem~\ref{thm:arbitraryweight1}.

We now prove part $(2)$.
Under the hypotheses on the integers $-r \leq w <0$ which satisfy $w \equiv -r \pmod{24}$,
we have the form $g_{r,\ell}=\sum a(n)q^{\frac{n}{24}} \in S_{\frac{\ell^{2}-r-1}{2}}(1,\nu^{-r}_{\eta},\Z)$ constructed in Proposition~\ref{arbitraryweight2proposition}. It satisfies
\[
a(n) \equiv p_{r}\(\frac{n+r}{24}\) \pmod{\ell} \ \ \ \ \text{ when } \(\frac{-rn}{\ell}\)=-1.
\]
We now argue as in the proof of part $(1)$ to conclude that there exists $\epsilon_{Q} \in \{\pm 1\}$ for which
\[
p_{r}\(\frac{Q^{2}n+r}{24}\) \equiv 0 \pmod{\ell} \ \ \ \ \text{if} \ \ \(\frac{-rn}{\ell}\)=-1 \ \ \text{ and } \ \ \ \(\frac{n}{Q}\)=\epsilon_{Q} .
\]
The result follows.
 \end{proof}
 
 \section{The case when $r=\ell-4$}
 When $r=\ell-4$, we obtain results stronger than Theorems $1.1-1.3$. The following result gives a short proof of \cite[Thm. $2.1$, $(2)$]{Boylan}.
\begin{lemma}
If $\ell \not \equiv 1 \pmod{6}$ is prime, then $f_{\ell-4,\ell}=0$. Thus, we have 
\[
p_{\ell-4}\(\frac{\ell n+\ell-4}{24}\)\equiv 0 \pmod{\ell} \ \ \ \text{for all $n$} .
\]
\end{lemma}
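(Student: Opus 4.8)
The plan is to pin down exactly which space contains $f_{\ell-4,\ell}$ and observe that, under the hypothesis on $\ell$, that space is zero. Since $\ell\ge 5$ is an odd prime, $r:=\ell-4$ is an odd positive integer with $r\le\ell-4$, so Proposition~\ref{FILTRATION} applies and gives
\[
f_{\ell-4,\ell}\in S_{(\ell-(\ell-4)-1)/2}\left(1,\nu^{-(\ell-4)\ell}_{\eta},\Z\right)=S_{3/2}\left(1,\nu^{-(\ell-4)\ell}_{\eta},\Z\right),
\]
a cusp form of weight $3/2$ satisfying the congruence \eqref{THATBLUEHEDGEHOG}. Because the eta-multiplier $\nu^{m}_{\eta}$ depends only on $m\bmod 24$ and $\ell^{2}\equiv 1\pmod{24}$ (as $\gcd(\ell,6)=1$), the exponent reduces as $-(\ell-4)\ell\equiv 4\ell-1\pmod{24}$. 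Now a prime $\ell\ge 5$ with $\ell\not\equiv 1\pmod 6$ satisfies $\ell\equiv 5\pmod 6$, so $4\ell\equiv 20\pmod{24}$ and therefore $-(\ell-4)\ell\equiv 19\pmod{24}$; that is, $f_{\ell-4,\ell}\in S_{3/2}(1,\nu^{19}_{\eta},\Z)$.

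The heart of the matter is then the (easy) fact that $S_{3/2}(1,\nu^{19}_{\eta})=\{0\}$. Note first that this vanishing is not forced by the criterion $M_{k}(1,\nu^{r}_{\eta})=\{0\}$ when $2k-r\not\equiv 0\pmod 4$, since here $2\cdot\tfrac{3}{2}-19\equiv 0\pmod 4$; a separate argument is genuinely needed. Let $f$ be any element of $S_{3/2}(1,\nu^{19}_{\eta})$. Its $q$-expansion is supported on exponents $n/24$ with $n\equiv 19\pmod{24}$ and $n\ge 0$ (cf. \eqref{rmod24expansion}), so $\ord_{\infty}f\ge 19/24$. Since $\eta^{-19}=q^{-19/24}\prod_{n\ge 1}(1-q^{n})^{-19}$ is holomorphic and nonvanishing on the upper half plane with $\ord_{\infty}\eta^{-19}=-19/24$, the product $\eta^{-19}f$ is a holomorphic modular form on $\SL_{2}(\Z)$ with trivial multiplier of weight $\tfrac{3}{2}-\tfrac{19}{2}=-8$. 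As there are no nonzero holomorphic modular forms of negative weight on $\SL_{2}(\Z)$, we get $\eta^{-19}f=0$, hence $f=0$. (Alternatively, the valence formula would force $\ord_{\infty}f\le k/12=1/8<19/24$ for $f\ne 0$, a contradiction.)

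Applying this with $f=f_{\ell-4,\ell}$ yields $f_{\ell-4,\ell}=0$, and then \eqref{THATBLUEHEDGEHOG} gives $p_{\ell-4}\!\left(\tfrac{\ell n+\ell-4}{24}\right)\equiv 0\pmod{\ell}$ for every integer $n$, which is the assertion. I do not expect a genuine obstacle here: the only delicate points are checking that $r=\ell-4$ is the allowed boundary case of Proposition~\ref{FILTRATION} and carrying out the reduction modulo $24$ correctly. It is worth emphasizing why the hypothesis $\ell\not\equiv 1\pmod 6$ cannot be removed by this method: when $\ell\equiv 1\pmod 6$ the exponent reduces instead to $4\ell-1\equiv 3\pmod{24}$, and $S_{3/2}(1,\nu^{3}_{\eta})=\C\,\eta^{3}\ne\{0\}$, so the argument breaks down — exactly as the scope of the lemma suggests.
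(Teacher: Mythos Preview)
Your proof is correct and follows essentially the same line as the paper's: identify $f_{\ell-4,\ell}\in S_{3/2}(1,\nu_\eta^{4\ell-1})$, use the support condition \eqref{rmod24expansion} to bound $\ord_\infty$, and divide by a power of $\eta$ to land in a space that is trivially zero. The only cosmetic difference is that you compute the residue $4\ell-1\equiv 19\pmod{24}$ explicitly and divide by $\eta^{19}$ to reach $M_{-8}(1)=\{0\}$, whereas the paper notes only that this residue exceeds $3$, divides by $\eta^{3}$, and lands in $S_{0}(1,\nu_\eta^{4\ell-4})=\{0\}$.
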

\begin{proof}
Note that
\begin{equation}\label{had to tag this}
f_{\ell-4,\ell} \in S_{\frac{3}{2}}(1,\nu^{4\ell-1}_{\eta}).
\end{equation}
Let $s \in \Z$ be such that $s \equiv 4\ell-1 \pmod{24}$ and $0<s<24$. By \eqref{rmod24expansion}, we see that $f_{\ell-4,\ell}$ vanishes to order $\geq s$. 
Since $\ell \not \equiv 1 \pmod{6}$, we have $s > 3$, so $\eta^{-3}f_{\ell-4,\ell} \in S_{0}(1,\nu_{\eta}^{4\ell-4})=\{0\}$. The result follows.
\end{proof}
 
 When $\ell \equiv 1 \pmod{6}$, we can use the fact that $\eta^{3}$ is an eigenform for the Hecke operators $T_{Q^{2}}$ for all primes $Q \geq 3$ to produce the following congruences.
 \begin{lemma}
 If $\ell \equiv 1 \pmod{6}$ is prime, then $f_{\ell-4}=c \eta^{3}$ for some $c \in \Z$. For all primes $Q \geq 3$, if $Q^{2} \nmid n$, then
 \[
 p_{r}\(\frac{\ell Q^{2}n+r}{24}\) \equiv \(\frac{-1}{Q}\)\(Q+1-\(\frac{12n}{Q}\)\)p_{r}\(\frac{\ell n+r}{24}\) \pmod{\ell}.
 \]
 \end{lemma}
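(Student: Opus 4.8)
The plan is to pin down $f_{\ell-4,\ell}$ exactly as a scalar multiple of $\eta^{3}$, and then to exploit the fact that the one-dimensional space it lives in is stable under every $T_{Q^{2}}$, so that $\eta^{3}$ is automatically a Hecke eigenform whose eigenvalue can be read off from its leading Fourier coefficient.

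\emph{Reducing to $\eta^{3}$.} By Proposition~\ref{FILTRATION} with $r=\ell-4$ we have $f_{\ell-4,\ell}\in S_{3/2}(1,\nu_{\eta}^{-(\ell-4)\ell},\Z)$, and since $\ell^{2}\equiv 1\pmod{24}$ the multiplier exponent satisfies $-(\ell-4)\ell\equiv 4\ell-1\pmod{24}$, which is $\equiv 3\pmod{24}$ precisely because $\ell\equiv 1\pmod 6$. Thus $f_{\ell-4,\ell}$ and $\eta^{3}$ both lie in $S_{3/2}(1,\nu_{\eta}^{3},\Z)$. Since $0<3<24$, any $g\in M_{3/2}(1,\nu_{\eta}^{3})$ has $\eta^{-3}g\in M_{0}(1)$, i.e.\ $\eta^{-3}g$ is constant; hence $S_{3/2}(1,\nu_{\eta}^{3})$ is one-dimensional, spanned by $\eta^{3}$. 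Comparing the coefficient of $q^{3/24}$, which is $1$ for $\eta^{3}$, gives $f_{\ell-4,\ell}=c\,\eta^{3}$ with $c\in\Z$ and $c\equiv p_{\ell-4}(\tfrac{\ell-1}{6})\pmod\ell$; this is the first assertion. (If $c\equiv 0\pmod\ell$ the displayed congruence is trivial, so we need only track $c$ modulo $\ell$.)

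\emph{The eigenvalue and the congruence.} Each $T_{Q^{2}}$ ($Q\geq 3$ prime) preserves the one-dimensional space $S_{3/2}(1,\nu_{\eta}^{3})$, so $\eta^{3}$, and hence $f_{\ell-4,\ell}$, is a $T_{Q^{2}}$-eigenform; write $\eta^{3}\sl T_{Q^{2}}=\lambda_{Q}\,\eta^{3}$. Using the classical expansion $\eta^{3}=\sum_{m\geq 1,\ m\ \mathrm{odd}}\(\frac{-4}{m}\)m\,q^{3m^{2}/24}$ together with the Hecke formula \eqref{Hecke1} at weight $k=\tfrac{3}{2}$, I would compare the coefficient of $q^{3/24}$ on both sides: the third term of \eqref{Hecke1} vanishes since $Q^{2}\nmid 3$, which yields $\lambda_{Q}=\(\frac{-4}{Q}\)Q+\(\frac{-1}{Q}\)=\(\frac{-1}{Q}\)(Q+1)$ (for $Q=3$ one uses the form of \eqref{Hecke1} valid when $3\mid r$, here $r=\ell-4$, obtaining the same value). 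Therefore $f_{\ell-4,\ell}\sl T_{Q^{2}}=\(\frac{-1}{Q}\)(Q+1)\,f_{\ell-4,\ell}$. Writing $f_{\ell-4,\ell}=\sum a(n)q^{n/24}$ and applying \eqref{Hecke1} once more, the eigenvalue identity reads
\[
a(Q^{2}n)+\(\frac{-1}{Q}\)\(\frac{12n}{Q}\)a(n)+Q\,a\(\frac{n}{Q^{2}}\)=\(\frac{-1}{Q}\)(Q+1)\,a(n);
\]
when $Q^{2}\nmid n$ the last term on the left drops out, so $a(Q^{2}n)=\(\frac{-1}{Q}\)\(Q+1-\(\frac{12n}{Q}\)\)a(n)$. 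Since Proposition~\ref{FILTRATION} gives $a(n)\equiv p_{\ell-4}(\tfrac{\ell n+\ell-4}{24})$ and $a(Q^{2}n)\equiv p_{\ell-4}(\tfrac{\ell Q^{2}n+\ell-4}{24})\pmod\ell$, reduction modulo $\ell$ yields the stated congruence.

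\emph{Main obstacle.} Everything hinges on the eigenvalue identity $\lambda_{Q}=\(\frac{-1}{Q}\)(Q+1)$, so the genuinely delicate step is the Hecke computation for $\eta^{3}$: one must check that the third term of \eqref{Hecke1} really vanishes at $n=3$, and handle the $Q=3$ case — where the relevant quadratic character comes from the $V_{8}$-normalization of \eqref{Shimura3midr} — with care. Before concluding I would double-check $\lambda_{Q}$ against a second Fourier coefficient of $\eta^{3}$, e.g.\ that of $q^{27/24}$.
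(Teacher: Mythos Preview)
Your argument is correct and follows essentially the same route as the paper: identify $S_{3/2}(1,\nu_\eta^{3})$ as one-dimensional so that $f_{\ell-4,\ell}=c\eta^{3}$, then use the eigenvalue identity $\eta^{3}\sl T_{Q^{2}}=\(\frac{-1}{Q}\)(Q+1)\eta^{3}$ together with \eqref{Hecke1} to read off the congruence. The only extra content you supply is the explicit determination of $\lambda_Q$ via Jacobi's expansion of $\eta^{3}$, and you correctly flag that the $Q=3$ case must be computed through the $V_{8}$-normalization rather than \eqref{Hecke1} verbatim (the factor $\(\tfrac{12n}{Q}\)$ degenerates there); carried out that way one indeed finds $\lambda_{3}=-4$ as well.
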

 
 \begin{proof}
 Since $\ell \equiv 1 \pmod{6}$, we know that $3 \mid 4\ell-1$.
We also see by \eqref{had to tag this} that 
\[
\eta^{-3}f_{\ell-4,\ell} \in M_{0}(1,\nu^{4\ell-4}_{\eta})=M_{0}(1). 
\]
Since $f_{\ell-4,\ell}$ has integral coefficients, we conclude that $f_{\ell-4,\ell}=c \eta^{3}$ for some $c \in \Z$.
It follows from \eqref{Hecke1} for each prime $Q \geq 3$ that  
 \[
 \eta^{3} \sl T_{Q^{2}}=\(\frac{-1}{Q}\)(Q+1)\eta^{3}. 
 \]
 Thus, when $Q^{2} \nmid n$, we have
 \[
 p_{r}\(\frac{\ell Q^{2} n+r}{24}\) \equiv \(\frac{-1}{Q}\)\(Q+1-\(\frac{12n}{Q}\)\)p_{r}\(\frac{\ell n+r}{24}\) \pmod{\ell}.
 \]
 \end{proof}
 \section{Acknowledgements}
The author would like to thank Scott Ahlgren for suggesting this project and making many helpful comments.
The author would also like to thank the Graduate
College Fellowship program at the University of Illinois at Urbana-Champaign and the Alfred P. Sloan
Foundation for their generous research support.
 \bibliographystyle{amsalpha}
\bibliography{Linearcongruencesforpowersofthepartitionfunction}
 \end{document}